\author{Sandro Gallo and Pablo M. Rodriguez} 
\title{Frog models on trees through renewal theory} 
\email{sandro.gallo@ufscar.br}
\email{pablor@icmc.usp.br}
\keywords{Frog model; Critical probability; Cone percolation; Renewal theory; homogeneous tree}
\subjclass[2000]{60K35; 60K05; 82C41}
\newtheorem{thm}{Theorem}[section]
\newtheorem{obs}{Observation}[section]
\newtheorem{prop}{Proposition}[section]
\newtheorem{coro}{Corollary}[section]
\newtheorem{lem}{Lemma}[section]
\newcommand{\bN}{\mathbb{N}}
\begin{document}

%
%
\maketitle
\begin{abstract}
This paper studies a class of growing systems of random walks on {regular} trees, known as  \emph{frog models with geometric lifetime} in the  literature. With the help of results from renewal theory, we derive new bounds for their critical parameters. Our approach also improve the bounds of the literature for the critical parameter of a percolation model on trees called \emph{cone percolation}.
\end{abstract}

   \section{Introduction}

In this work we explore a new approach for studying the localization of the critical parameter of a growing system of random walks on {regular} trees, known as  \emph{frog models with lifetime} in the  literature. This approach is based on a link between undelayed renewal sequences and a frog model on directed {regular} trees.  

On  {general} infinite connected graphs, the original frog model with geometric lifetime is inspired in a process of information transmission on a moving population and may be informally described as follows. Assume that at time $0$ each vertex of the graph has one particle which may be in one of two states: active or inactive. Each active particle performs, independently of the others, a discrete time random walk through the vertices of the {graph} during a random number of steps, geometrically distributed with parameter $1-p$, for some $p\in(0,1)$. This time is called the lifetime of the active particle, and once it is reached, we assume that the particle is removed from the system. Removed particles play no role in the spreading procedure. On the other hand, if an active particle jumps to a vertex containing an inactive one, then the inactive particle becomes active and starts  an independent random walk on the {graph}. Usually, it is assumed that the process starts with one active particle at {a fixed vertex, and inactive particles everywhere else}. We refer the reader to \cite{alves2002} for a formal definition of the model. 

One of the main questions of interest is the survival, or not, of a particular realization of the process, that is, whether or not there is, at any time, at least one active particle on the {graph}. This question has been addressed on homogeneous trees and other infinite graphs like hypercubic lattices by \cite{alves2002} and \cite{lebensztayn/machado/popov/2005}. 
On infinite trees, a simple coupling argument shows that the probability of survival of the frog model on $\mathbb{T}_d$, the homogeneous tree of degree $d+1$ {(i.e., any vertex has $d+1$ neighbors)}, which we denote by $\theta(d,p)$, is a nondecreasing function of $p$, and therefore we may define the critical parameter as $p_c(d ):=\inf\{p:\theta(d,p)>0\}.$   
In \cite{alves2002} the authors present a necessary and sufficient condition for the existence of phase transition on $\mathbb{T}_d$, i.e. for $0<p_c(d)<1$, and also give {the bounds $(d+1)/(2d+1)\le p_c(d)\le(d+1)/(2d-2)$} for the critical parameter. Their upper bound was later improved {in \cite{lebensztayn/machado/popov/2005}, where it is proved that $p_c(d)\le(d+1)/(2d)$}. Similar results are obtained by \cite{lebensztayn/machado/martinez/2006}, for the modified version of the model in which each active particle performs a self-avoiding discrete time random walk on the tree. The present work is in the line of these  papers. { We refer the reader to \cite{alves2002,hoffman2017} and references therein for results related to the behavior of the frog model when the lifetimes of the particles are not restricted. More precisely, \cite{alves2002} obtains a shape theorem for the model on the $d$-dimensional hypercubic lattice; while \cite{hoffman2017} studies recurrence/transience for the model on trees. The latter is indeed a topic of current research.}

The paper is organized as follows. We define in Section \ref{ss:frogs_trees} a frog model on directed trees, and obtain tight bounds for the critical parameter as function of $d$. In Section \ref{sec:applications} we will compare our model to three models (the original frog model, the self-avoiding frog model and the cone percolation with geometric radius), and improve several previously known bounds. Section \ref{sec:renewal} is an interlude on renewal processes, containing results that we will use for our proofs. Section \ref{sec:proofs} contains the proofs, our  strategy will be to use the link, originally showed in \cite{gallo/garcia/junior/rodriguez/2014}, between {some} long range information propagation models on $\bN$ and undelayed renewal sequences.

\section{Frog model on directed trees}\label{ss:frogs_trees}
\subsection{Definition of the model} 
{
We consider the directed rooted tree $\overrightarrow{\mathbb{T}}_d=(\mathcal{V},\overrightarrow{\mathcal{E}})$, defined by making all the edges of the $d$-regular tree $\mathbb{T}_d=(\mathcal{V},\mathcal{E})$ point away from the root. We define the distance between $u,v\in\mathcal{V}$, denoted by $d(u,v) $, as the number of edges in the unique path connecting them. We write $u< v$ if $u\neq v$ and $u$ is one of the vertices of the path connecting the root to $v$.} In the present paper, we consider a  frog model on $\overrightarrow{\mathbb{T}}_d$, that is, the active particles try to activate other particles localized away from the root, as shown on Figure \ref{fig:frogs}.

In order to define the model, let $(\Omega,\mathcal{F},\mathbb{P})$ be a probability space where the following random objects are independent and well defined for any $v\in \mathcal{V}$:  $(S_{n}^{v})_{n\geq 0}$ is a  discrete time symmetric random walk on $\overrightarrow{\mathbb{T}}_d$ starting from $v$ and $ \mathcal{T}_v$ a $\bN$-valued random variable satisfying $\mathbb{P}(\mathcal{T}_v\ge n)=c(dq)^n$, for some $c\in(0,1]$ and  {$q\in(0,1)$ such that $c(dq)^n <1$ for any $n{\geq 1}$.  
The random walk $(S_{n}^{v})_{n\geq 0}$ represents the trajectory of the  particle starting at $v$ and $\mathcal{T}_v$ represents its lifetime. We now define the  {truncated random walk starting at} $v$, $(R_n^v)_{n\ge0}$, by
$$R_{n}^{v}:=\left\{
\begin{array}{ll}
S_n^{v}, & n<\mathcal{T}_{v};\\[.2cm]
S_{\mathcal{T}_{v}-1}^{v}, & n\geq \mathcal{T}_{v}.\\[.2cm]
\end{array}\right.
$$
Observe that, by symmetry, for any $n\ge1$ and any $u\in \partial T_{v}^n:=\{u\in \mathcal{V}: u>v, \text{dist}(v,u)=n\}$  
\begin{equation}\label{eq:Rn}
\mathbb{P}(R_n^{v} = u)=c q^n.
\end{equation}

\begin{figure}[h]
\begin{center}
\subfigure[][$t=0$]{

\tikzstyle{level 1}=[sibling angle=120]
\tikzstyle{level 2}=[sibling angle=100]
\tikzstyle{level 3}=[sibling angle=80]
\tikzstyle{every node}=[fill]

\tikzstyle{edge from parent}=[segment length=0.8mm,segment angle=10,draw,->]
\begin{tikzpicture}[scale=0.5,grow cyclic,shape=circle,minimum size = 2pt,inner sep=1.5pt,level distance=13mm,
                    cap=round]
\node {} child [\A] foreach \A in {black,black,black}
    { node {} child [color=\A!50!\B] foreach \B in {black,black}
        { node {} child [color=\A!50!\B!50!\C] foreach \C in {black,black}
            { node {} }
        }
    };
    
\filldraw [gray] (0,0) circle (3pt);
\draw [dashed,->] (0.1,0.2) to [bend left=40] (1.2,0.2);    

\end{tikzpicture}

}
\qquad
\subfigure[][$t=1$]{

\tikzstyle{level 1}=[sibling angle=120]
\tikzstyle{level 2}=[sibling angle=100]
\tikzstyle{level 3}=[sibling angle=80]
\tikzstyle{every node}=[fill]
\tikzstyle{edge from parent}=[segment length=0.8mm,segment angle=10,draw,->]
\begin{tikzpicture}[scale=0.5,grow cyclic,shape=circle,minimum size = 2pt,inner sep=1.5pt,level distance=13mm,
                    cap=round]
\node [white] {} child [\A] foreach \A in {black,black,black}
    { node {} child [color=\A!50!\B] foreach \B in {black,black}
        { node {} child [color=\A!50!\B!50!\C] foreach \C in {black,black}
            { node {} }
        }
    };

\draw  (0,0) circle (2.1pt);        
\filldraw [white] (1.3,0) circle (2pt);        
\filldraw [gray] (1.25,0.15) circle (3pt);    
\filldraw [gray] (1.35,-0.1) circle (3pt);    
\draw [dashed,->] (1.26,0.35) to [bend left=40] (1.95,1);    
\draw [dashed,->] (1.6,-0.1) to [bend right=40] (2.16,0.8);    

\end{tikzpicture}

}
\qquad
\subfigure[][$t=2$]{

\tikzstyle{level 1}=[sibling angle=120]
\tikzstyle{level 2}=[sibling angle=100]
\tikzstyle{level 3}=[sibling angle=80]
\tikzstyle{every node}=[fill]
\tikzstyle{edge from parent}=[segment length=0.8mm,segment angle=10,draw,->]
\begin{tikzpicture}[scale=0.5,grow cyclic,shape=circle,minimum size = 2pt,inner sep=1.5pt,level distance=13mm,
                    cap=round]
\node [white] {} child [\A] foreach \A in {black,black,black}
    { node [white] {} child [color=\A!50!\B] foreach \B in {black,black}
        { node  {} child [color=\A!50!\B!50!\C] foreach \C in {black,black}
            { node {} }
        }
    };

\draw  (0,0) circle (2.5pt);        
\filldraw  (-0.65,1.11) circle (2.5pt);        
\filldraw  (-0.65,-1.11) circle (2.5pt);        
\filldraw [white] (1.3,0) circle (2pt);
\draw  (1.3,0) circle (2.5pt);                
\filldraw [white] (2.135,1) circle (2pt);    
\filldraw [gray] (2,1) circle (3pt);    
\filldraw [gray] (2.2,1.15) circle (3pt);    
\filldraw [gray] (2.23,0.87) circle (3pt);    
\draw [dashed,->] (1.8,1.1) to [bend left=40] (2,2.2);    
\draw [dashed,->] (2.4,0.7) to [bend right=40] (3.4,1.1);    

\end{tikzpicture}

}


\end{center}
\caption{Realization of the frog model on $\mathbb{T}_2$. Active and inactive particles are represented in gray and black, resp.}
\label{fig:frogs}
\end{figure}
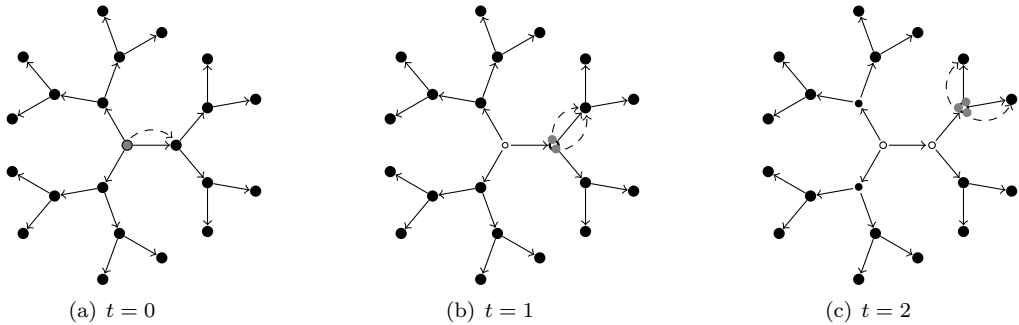


\smallskip
\begin{obs}
{Several papers consider models in which the process is started with a random number of particles at each vertex of the tree. We can do this as well, but in order to simplify the presentation, we only presented the results in the one-per-site case. The reader interested in an extension to random initial configuration may consult \cite[Section 5]{lebensztayn/machado/popov/2005} where the changes to be done in the proofs are explained. }
\end{obs}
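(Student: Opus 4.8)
The plan is to revisit every point where the one-per-site assumption enters the construction and to check that nothing breaks when each vertex is initialized with an independent random number of particles. Concretely, I would replace the single particle at $v$ by an i.i.d.\ family $(\eta_v)_{v\in\mathcal{V}}$ of copies of a fixed $\bN$-valued random variable $\eta$, independent of all the walks $(S^v_n)_n$ and lifetimes $\mathcal{T}_v$, with the activation rule that the \emph{first} time a vertex $v$ is hit, all $\eta_v$ particles sitting there become active simultaneously, each then performing its own truncated walk $R^\cdot$. The entire downstream analysis is built from the single identity \eqref{eq:Rn} via renewal theory and coupling, so the only genuinely new computation is its generalization.

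First I would generalize \eqref{eq:Rn}. Since the $\eta_v$ particles issued from $v$ perform independent truncated walks, the probability that \emph{none} of them freezes at a prescribed $u\in\partial T_v^n$ equals $\mathbb{E}\big[(1-cq^n)^{\eta}\big]=g(1-cq^n)$, where $g(s):=\mathbb{E}[s^{\eta}]$ is the generating function of $\eta$; hence a given descendant at distance $n$ is reached with probability $1-g(1-cq^n)$. This is the quantity that must be substituted for $cq^n$ everywhere. I would then feed it into the undelayed renewal sequence that the paper attaches to propagation along a fixed ray, verifying that it is still a bona fide renewal sequence (now with increment law determined by $g$) and that survival versus extinction is decided by the same summability/transience criterion read off the modified sequence. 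The coupling and monotonicity arguments that yield the bounds transfer essentially verbatim: increasing the multiplicities $\eta_v$ can only help propagation, so a standard monotone coupling preserves the direction of every inequality, and only the explicit constants change through the replacement $cq^n\mapsto 1-g(1-cq^n)$.

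The step I expect to require the most care is the integrability condition. In the one-per-site case the standing assumption is $c(dq)^n<1$ for all $n\geq1$, i.e.\ control of the expected number of frozen particles at generation $n$, which equals $d^n\,cq^n$. With random multiplicities this becomes $d^n\big(1-g(1-cq^n)\big)$, so I would impose a moment condition on $\eta$; a finite mean suffices, since convexity of $g$ gives $1-g(1-cq^n)\le \mathbb{E}[\eta]\,cq^n$, and hence $d^n\big(1-g(1-cq^n)\big)\le \mathbb{E}[\eta]\,c(dq)^n$ remains summable. Checking that the associated renewal sequence inherits the required tail behaviour under this condition is the one nontrivial verification; once it is in place the remaining arguments are bookkeeping, and the resulting bounds depend on $\eta$ only through $g$ and $\mathbb{E}[\eta]$, exactly as in the reduction described in \cite[Section 5]{lebensztayn/machado/popov/2005}.
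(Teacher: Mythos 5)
The statement you were asked to prove is not actually proved in the paper: it is an \emph{Observation}, and the authors' entire justification is the pointer to \cite[Section 5]{lebensztayn/machado/popov/2005}, where the modifications for random initial configurations are spelled out for the original frog model. Your blind sketch is therefore not being measured against an argument in the paper but is reconstructing the delegated one, and in outline it is correct and consistent with what that reference does. The structural observation you make is the right one: on $\overrightarrow{\mathbb{T}}_d$ activation of a vertex depends only on the dynamics along the ray from the root, each particle from $v$ independently visits a prescribed $u\in\partial T_v^n$ with probability $cq^n$, so the effective reach of an activated vertex along a fixed ray has tail $\mathbb{P}(D\ge n)=1-g(1-cq^n)$, and the firework--renewal correspondence of \cite{gallo/garcia/junior/rodriguez/2014} applies verbatim because it only needs i.i.d.\ radii, not the geometric hazard form.

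The one place where ``bookkeeping'' undersells the remaining work is Lemmas \ref{prop:renewal} and \ref{lem1}, whose proofs use the exact identity $h_k=cq^k$ (via $f_n/(cq^n)\to\prod_{i\ge1}(1-cq^i)>0$ to locate the radius of convergence of $F$ and to get defectiveness). With multiplicities the hazard rate $h_k=1-g(1-cq^k)$ is no longer exactly geometric, but your finite-mean hypothesis rescues everything: convexity gives $1-g(1-x)\le \mathbb{E}[\eta]\,x$ and in fact $1-g(1-x)\sim \mathbb{E}[\eta]\,x$ as $x\downarrow0$, so $h_k\sim \mathbb{E}[\eta]\,cq^k$, the radius of convergence of $F$ is still $q^{-1}>1$, the defect $f_\infty=\prod_{i\ge1}g(1-cq^i)$ is strictly positive since $\sum_k h_k<\infty$, and the Br\'emaud root-finding argument and the continuity argument of Lemma \ref{lem1} go through unchanged. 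Two refinements you could add: (i) finite mean is sufficient but not sharp --- defectiveness of the renewal process, hence nontriviality of $q_c$, needs only $\sum_k\bigl(1-g(1-cq^k)\bigr)<\infty$, which amounts to $\mathbb{E}[\log^+\eta]<\infty$; when this fails the ray dynamics becomes non-defective and survival is trivial for every $q>0$, confirming that some moment condition is genuinely necessary, as you anticipated; (ii) one should either place a deterministic particle at the root or condition on $\eta_{\mathbf{0}}\ge1$, so that survival is not trivially lost at time zero --- this does not affect the value of $q_c$.
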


\subsection{Results concerning frogs on directed trees}

Let $\theta(d,c,q)$ denote the probability of survival of this model. For this model also, a coupling argument shows that, for $d\ge2$,  $\theta(d,c,q)$ is monotone nondecreasing in $q$ and thus there exists a critical parameter
\begin{equation}\label{eq:qc}
q_c(d,c):=\inf\{q>0:\theta(d,c,q)>0\}.
\end{equation} 

Our first main result is an equality that the critical parameter $q_c$ must satisfy.

\smallskip
\begin{thm}\label{theo}
For any fixed $d\ge2$ and $c\in(0,1]$, the critical value $q_c=q_c(c,d)$ is the solution of the equality
\begin{equation}\label{eq:localization}
\sum_{k\ge1}c(dq)^k\prod_{i=1}^{k-1}(1-cq^i)=1.
\end{equation}
In particular, $q_c\in(0,1/d)$.
\end{thm}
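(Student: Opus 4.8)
The plan is to separate the statement into an analytic part---locating the unique solution of \eqref{eq:localization} inside $(0,1/d)$---and a probabilistic part---showing that this solution is exactly the critical value $q_c$ defined in \eqref{eq:qc}. Write $G(q):=\sum_{k\ge1}c(dq)^k\prod_{i=1}^{k-1}(1-cq^i)$. First I would check that $G$ is well defined and continuous on $(0,1/d)$: each summand is continuous, and for $q<1/d$ the series is dominated by $\sum_k c(dq)^k<\infty$, so the convergence is uniform on compact subsets of $(0,1/d)$. Next I would compute the two boundary behaviours. As $q\downarrow0$ the series is dominated by its first term, giving $G(q)\sim cdq\to0$. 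As $q\uparrow 1/d$ each term converges to $c\prod_{i=1}^{k-1}(1-cd^{-i})$, and since $\sum_i cd^{-i}<\infty$ these limits stay bounded below by the strictly positive constant $c\prod_{i\ge1}(1-cd^{-i})$; as the terms do not vanish, the series diverges and $G(q)\to+\infty$. Hence $G$ takes values both below and above $1$ on $(0,1/d)$, and by continuity the equation $G(q)=1$ has at least one solution there.

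The core is to prove the equivalence $\theta(d,c,q)>0\iff G(q)>1$, and here I would invoke the link between information-propagation models on $\mathbb{N}$ and undelayed renewal sequences from \cite{gallo/garcia/junior/rodriguez/2014}, together with the renewal estimates collected in Section~\ref{sec:renewal}. The idea is to read the spread of activation down the directed tree through the \emph{depths} at which a ``regeneration'' occurs, and to show that these regeneration depths form an undelayed renewal process whose inter-arrival law is precisely $f_k=c(dq)^k\prod_{i=1}^{k-1}(1-cq^i)$. The factor $c(dq)^k=\sum_{u\in\partial T_v^k}\mathbb{P}(R_k^v=u)$ is the probability that a given particle is still alive when it reaches depth $k$ (summing \eqref{eq:Rn} over the $d^k$ vertices at that depth), while the product $\prod_{i=1}^{k-1}(1-cq^i)$ records, through independent ``miss'' events of probability $1-cq^i$ (again read off from \eqref{eq:Rn}), that the $k-1$ shallower vertices along the relevant path are not themselves regeneration points. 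With this identification the renewal mass function has total mass $\sum_k f_k=G(q)$, so that $G(q)=1$ is exactly the boundary between a proper (recurrent) and a defective (terminating) renewal.

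It then remains to convert this dichotomy into statements about survival. For the subcritical regime $G(q)<1$ the associated renewal is defective, so the renewal sequence $u_n$ solving $u_n=\sum_{k=1}^n f_k u_{n-k}$ with $u_0=1$ decays to $0$; a first-moment bound showing that the probability that the activation ever reaches depth $n$ is at most a quantity of this order then forces $\theta(d,c,q)=0$. For the supercritical regime $G(q)>1$ I would first truncate the lifetimes so that only reaches up to some depth $N$ are kept, choosing $N$ large enough that $\sum_{k=1}^N f_k>1$, which is possible since $G(q)>1$; the truncated process is dominated by the original one, and on it the regeneration points sitting in disjoint subtrees can be taken genuinely independent, so that they form a Galton--Watson process with mean offspring $\sum_{k=1}^N f_k>1$. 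This process survives with positive probability, whence $\theta(d,c,q)>0$. Combining the two regimes gives $\theta>0\iff G(q)>1$; since $q\mapsto\theta(d,c,q)$ is nondecreasing, the set $\{q:\theta>0\}$ is an up-interval, it coincides with $\{q:G(q)>1\}$, and continuity of $G$ together with the boundary behaviour from the first paragraph forces $G(q_c)=1$ and $q_c\in(0,1/d)$.

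The delicate point is the probabilistic reduction of the second paragraph: on the tree the truncated walks starting from vertices on a common ray use overlapping randomness, so the regeneration depths are not obviously a renewal process, and one must argue that the dependencies collapse to the single inter-arrival law $f_k$. This is exactly what the correspondence of \cite{gallo/garcia/junior/rodriguez/2014} is designed to achieve, and verifying that the directed frog model fits its hypotheses---and that the independence used in the supercritical embedding is legitimate---is where the real work lies; by contrast the analytic facts about $G$ and the subcritical first-moment bound are routine.
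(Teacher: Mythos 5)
Your proposal is correct in substance and rests on the same probabilistic backbone as the paper---the reduction to a single branch (the union bound giving \eqref{eq:suf1} and a Galton--Watson domination giving \eqref{eq:suf2}), plus the firework--renewal correspondence of \cite{gallo/garcia/junior/rodriguez/2014} yielding $p_{q,n}=\mathbb{P}(Y_n=1)$---but your analytic core takes a genuinely different route. The paper keeps the \emph{untilted} single-branch renewal, with defective inter-arrival law $\tilde f_k=cq^k\prod_{i=1}^{k-1}(1-cq^i)$, introduces the renewal convergence rate $\gamma(q)$ via Fekete's lemma, characterizes it by a Kendall-type theorem from \cite{bremaud/1999} (Lemma \ref{prop:renewal}), proves continuity of $q\mapsto\gamma(q)$ (Lemma \ref{lem1}, via \cite{petersson/silvestrov/2013}), and squeezes $\gamma(q_c)=d$ before converting this into \eqref{eq:localization}. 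You instead tilt by $d^k$ and work directly with $G(q)=\sum_k d^k\tilde f_k$: since $u_n:=d^np_{q,n}$ satisfies the renewal \emph{equation} with kernel $f_k=d^k\tilde f_k$, the regime $G(q)<1$ gives $\sum_n u_n=(1-G(q))^{-1}<\infty$, hence $u_n\to0$ and extinction, while $G(q)>1$ gives survival; continuity of $G$ is elementary (local uniform convergence), as are the boundary limits $G(0^+)=0$ and $G((1/d)^-)=\infty$ that deliver $q_c\in(0,1/d)$. What your route buys is a real simplification: it bypasses Fekete, Br\'emaud's theorem, and the delicate continuity-of-$\gamma$ lemma, replacing them with the trivial summability bound and dominated convergence.

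Two soft spots, both fixable. First, when $G(q)>1$ the numbers $f_k$ sum to more than one, so they are not the inter-arrival law of \emph{any} renewal process; your phrase ``the regeneration depths form an undelayed renewal process with inter-arrival law $f_k$'' is an abuse (the correct statement is the renewal-equation identity for $u_n$, or a branching interpretation of $f_k$ as a mean count of first-regeneration descendants). Relatedly, the claimed equivalence $\theta>0\Leftrightarrow G(q)>1$ overreaches at the critical point: when $G(q)=1$ the renewal is proper and $u_n\to1/\mu$, so your first-moment bound says nothing there. Neither issue matters for the theorem: the two strict implications $G(q)<1\Rightarrow q\le q_c$ and $G(q)>1\Rightarrow q\ge q_c$, combined with continuity of $G$, squeeze $G(q_c)=1$ exactly as the paper squeezes $\gamma(q_c)=d$, so you should phrase the conclusion that way rather than via the equivalence. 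Second, your supercritical embedding at ``first regeneration points'' requires the antichain/measurability verification you honestly flag (the event that $v$ is a first regeneration point must be measurable with respect to randomness outside the subtree of $v$ for the offspring to be i.i.d.); a cleaner repair is to deduce from $G(q)>1$ that $u_N=d^Np_{q,N}>1$ for some $N$ (tilt the kernel: there is $\beta\in(0,1)$ with $\sum_kf_k\beta^k=1$, making $\beta^nu_n$ a proper aperiodic renewal sequence, whence $u_n\to\infty$), and then invoke the fixed-depth coupling of \cite{lebensztayn/machado/popov/2005} exactly as the paper does in \eqref{eq:suf2}.
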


\smallskip
\begin{obs}
{We point out that there are different ways to present condition \eqref{eq:localization}.} The q-Pochhammer symbol is defined as $(a;x)_{k}:=\prod_{i=0}^{k-1}(1-a x^i)$, with the convention that $(a;x)_{0}=1$. Then, Theorem \ref{theo} states that $q_c$ is the solution of $\sum_{k\ge1}c(dq)^k (cq;q)_k=1.$ An alternate way of stating \eqref{eq:localization} is considering $f_k^{(q)}:=cq^k\prod_{i=1}^{k-1}(1-cq^i),k\ge1$, a probability distribution indexed by $q$ (recall that $c$ and $d$ are fixed). Thus, letting $N_q$ be a random variable with distribution $f_k^{(q)},k\ge1$, $q_c$ is such that $\mathbb{E}d^{N_{q_c}}=1$.
\end{obs}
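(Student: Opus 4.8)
The observation collects three equivalent renditions of the localization condition \eqref{eq:localization}, and my plan is to verify each by an elementary computation, treating the q-Pochhammer form, the probabilistic form, and the expectation form in turn. For the first, I would substitute $a=cq$ and $x=q$ into the definition $(a;x)_k=\prod_{i=0}^{k-1}(1-ax^i)$ and apply the index shift $j=i+1$, obtaining $(cq;q)_{k-1}=\prod_{i=0}^{k-2}(1-cq^{\,i+1})=\prod_{j=1}^{k-1}(1-cq^{\,j})$. This is exactly the product appearing in \eqref{eq:localization}, so substituting it term by term recasts the condition as $\sum_{k\ge1}c(dq)^k(cq;q)_{k-1}=1$, i.e.\ the q-Pochhammer form of the observation with the index read as $k-1$; no convergence issue intervenes because at $q=q_c<1/d$ the series already converges.

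For the probabilistic form, the first step is positivity: since $c\in(0,1]$ and $q\in(0,1)$ we have $cq^i\le q^i<1$ for every $i\ge1$, so each factor $1-cq^i$ lies in $(0,1)$ and hence $f_k^{(q)}>0$ for all $k\ge1$. I would then read $f_k^{(q)}$ as the law of the first-success time $N_q:=\min\{i\ge1:\xi_i=1\}$ of an independent Bernoulli sequence with $\mathbb{P}(\xi_i=1)=cq^i$, for which indeed $\mathbb{P}(N_q=k)=\big(\prod_{i=1}^{k-1}(1-cq^i)\big)\,cq^k=f_k^{(q)}$. The total mass would then follow from the telescoping identity
\begin{equation*}
\sum_{k=1}^{K}f_k^{(q)}=1-\prod_{i=1}^{K}(1-cq^i),
\end{equation*}
which I would prove by a short induction on $K$.

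The expectation form is then immediate: pulling $d^k$ into each summand gives $\sum_{k\ge1}d^k f_k^{(q)}=\sum_{k\ge1}c(dq)^k\prod_{i=1}^{k-1}(1-cq^i)$, whose right-hand side is the left-hand side of \eqref{eq:localization}. Reading the latter as $\mathbb{E}\,d^{N_q}$ and specializing to $q=q_c$ therefore yields $\mathbb{E}\,d^{N_{q_c}}=1$, as claimed.

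The one genuinely delicate point --- and the step I expect to be the main obstacle --- is the total mass. Letting $K\to\infty$ in the telescoping identity gives $\sum_{k\ge1}f_k^{(q)}=1-\prod_{i\ge1}(1-cq^i)$, and since $\sum_{i\ge1}cq^i=cq/(1-q)<\infty$ the infinite product is strictly positive; thus $f^{(q)}$ is in fact a defective law, with $\mathbb{P}(N_q=\infty)=\prod_{i\ge1}(1-cq^i)>0$ (the realization in which no success ever occurs, mirroring a particle that dies before propagating the process). Consequently $\mathbb{E}\,d^{N_q}$ in the third reformulation must be understood on the event $\{N_q<\infty\}$, that is, as $\sum_{k\ge1}d^k f_k^{(q)}$, and the identity $\mathbb{E}\,d^{N_{q_c}}=1$ says precisely that the branching factor $d$ tilts the defective inter-renewal law $f^{(q)}$ into a proper one exactly at criticality. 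This exponential-tilting reading is the renewal-theoretic content that makes the reformulation natural, and it is the only place where care beyond bookkeeping is required.
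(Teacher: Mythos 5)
Your verification is correct, and it follows the same direct substitution that the paper leaves implicit: the observation is stated without proof, being an immediate rewriting of \eqref{eq:localization}, and your term-by-term check is the natural (indeed the only) argument. Two of your side remarks in fact sharpen the statement as printed. First, your index bookkeeping is right: with the paper's definition $(a;x)_{k}=\prod_{i=0}^{k-1}(1-ax^i)$, the product $\prod_{i=1}^{k-1}(1-cq^i)$ appearing in \eqref{eq:localization} equals $(cq;q)_{k-1}$, so the condition reads $\sum_{k\ge1}c(dq)^k(cq;q)_{k-1}=1$; the displayed $(cq;q)_k$ in the observation is an off-by-one slip relative to the paper's own convention. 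Second, you are right that $f^{(q)}$ is defective rather than a proper law on $\{1,2,\ldots\}$: this is exactly what the paper records in \eqref{eq:distribution2} of Section \ref{sec:renewal}, where $f_\infty=\prod_{i\ge1}(1-cq^i)>0$, so the phrase ``probability distribution'' in the observation must be read on $\{1,2,\ldots\}\cup\{\infty\}$, and $\mathbb{E}\,d^{N_{q_c}}=1$ as $\sum_{k\ge1}d^kf_k^{(q_c)}$, i.e., the expectation restricted to $\{N_{q_c}<\infty\}$, precisely as you say. Your exponential-tilting reading is also the renewal-theoretic content the paper exploits: Lemma \ref{prop:renewal} tilts the defective inter-arrival law by $\gamma$, and the proof of Theorem \ref{theo} shows $\gamma(q_c)=d$, so criticality is exactly the point where the tilt by $d$ restores total mass one. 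Nothing in your argument needs repair.
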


\smallskip
We obtain the following bounds relating $q_c$, $d$ and $c$.

\smallskip
 \begin{coro}\label{cor:precise}
\begin{equation}\label{eq:precise}
\frac{(c +1)\left(1- \sqrt{1-4q_c\frac{c^2}{(c+1)^2} }\right)}{2q_c^2 c^2}\le d\le \frac{(c+1)(1 - \sqrt{1-4\frac{c^2}{(c+1)^2}q_c(q_c+1)}}{2 c^2q_c^2(q_c + 1)}.
\end{equation}
\end{coro}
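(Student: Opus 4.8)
The plan is to start from the identity of Theorem~\ref{theo}, evaluated at $q=q_c$,
\[
\sum_{k\ge1}c(dq_c)^k\prod_{i=1}^{k-1}(1-cq_c^i)=1,
\]
and to sandwich the product $P_k:=\prod_{i=1}^{k-1}(1-cq_c^i)$ between two elementary surrogates for which the series becomes geometric and sums in closed form. Since Theorem~\ref{theo} already gives $q_c\in(0,1/d)$, we have $dq_c<1$, so every geometric series below converges. Replacing $P_k$ by an upper (resp.\ lower) bound turns the left-hand side into an upper (resp.\ lower) bound for $1$ which, after clearing the denominator $1-dq_c$, becomes a quadratic inequality in $d$; solving it yields the two inequalities of \eqref{eq:precise}. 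The two quadratics differ only in the coefficient of $d^2$, namely $c^2q_c^3$ versus $c^2q_c^3(q_c+1)$, and this is exactly what produces the two square roots, the extra factor $(q_c+1)$ reflecting a one- versus two-term truncation of the product.

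For the lower bound on $d$ I would keep the $k=1$ term exact and use $P_k\le 1-cq_c$ for every $k\ge2$ (the first factor dominates the product). This gives
\[
1\le cdq_c+(1-cq_c)\sum_{k\ge2}c(dq_c)^k=cdq_c+(1-cq_c)\frac{c(dq_c)^2}{1-dq_c}.
\]
Multiplying by $1-dq_c>0$ and simplifying collapses this to $c^2q_c^3d^2-(c+1)q_cd+1\le0$; since the quadratic opens upward, $d$ lies above its smaller root, which after simplification is precisely the left-hand side of \eqref{eq:precise}. For the upper bound I would mirror the argument with a lower bound on the tail, keeping the $k=1$ term exact and bounding $P_k$ below by the two-term expression $1-cq_c-cq_c^2$ for $k\ge2$, which yields
\[
1\ge cdq_c+(1-cq_c-cq_c^2)\frac{c(dq_c)^2}{1-dq_c};
\]
this collapses to $c^2q_c^3(q_c+1)d^2-(c+1)q_cd+1\ge0$, forcing $d$ onto the lower branch of this quadratic, i.e.\ the right-hand side of \eqref{eq:precise}. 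The crude estimate $P_k\le 1$, which gives $d\ge 1/((c+1)q_c)$, serves to identify the correct (smaller-root) branch and to confirm $d$ is not on the far branch.

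The hard part will be justifying the lower bound on $P_k$ used in the last display. It is \emph{not} true that $P_k\ge 1-cq_c-cq_c^2$ holds term by term: $P_k$ decreases in $k$ to $\prod_{i\ge1}(1-cq_c^i)$, and a short expansion shows this limit can sit slightly below $1-cq_c-cq_c^2$ when $c<1$ (at $c=1$ the pentagonal number theorem saves it). What is actually needed is the weighted inequality $\sum_{k\ge2}(dq_c)^kP_k\ge(1-cq_c-cq_c^2)\sum_{k\ge2}(dq_c)^k$: the weights $(dq_c)^k$ concentrate on the smallest indices, where $P_2=1-cq_c$ and $P_3=(1-cq_c)(1-cq_c^2)$ exceed $1-cq_c-cq_c^2$ by $cq_c^2$ and $c^2q_c^3$ respectively, and this surplus compensates the tiny deficits for large $k$.

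I would control those deficits by writing $P_2-P_k=(1-cq_c)\big(1-\prod_{i=2}^{k-1}(1-cq_c^i)\big)$ and bounding $1-\prod_{i=2}^{k-1}(1-cq_c^i)\le cq_c^2/(1-q_c)$, then summing the resulting geometric tail against the $k=2$ surplus $cq_c^2(dq_c)^2$; the inequality $dq_c<1$ together with $q_c<1/d\le 1/2$ keeps the tail subordinate to this surplus. The same two inequalities, with $c\le1$, should also be recorded to show that the discriminants $(c+1)^2-4c^2q_c$ and $(c+1)^2-4c^2q_c(q_c+1)$ are nonnegative, so that the square roots in \eqref{eq:precise} are real.
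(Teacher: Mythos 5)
Your lower-bound half is exactly the paper's argument: keep the $k=1$ term, use $P_k\le 1-cq_c$ for $k\ge2$, sum the geometric series and solve $c^2q_c^3d^2-(c+1)q_cd+1\le0$; that part is correct. You are also right, and more careful than the paper, about the other half: the paper's display \eqref{eq:ref} asserts the pointwise bound $\prod_{i=1}^{n-1}(1-cq^i)\ge 1-cq-cq^2$ ``easily by recursion'' and plugs it in, but this bound is genuinely false for $c<1$, since to second order in $c$ one has $P_\infty-(1-cq-cq^2)\approx\frac{cq^3}{1-q}\bigl(\frac{c}{1-q^2}-1\bigr)$, which is negative as soon as $c<1-q^2$; it does hold at $c=1$, as you note. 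So your diagnosis exposes a real gap in the paper's own proof for general $c$.

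The gap in your proposal is the repair itself: the weighted inequality $\sum_{k\ge2}(dq_c)^kP_k\ge(1-cq_c-cq_c^2)\sum_{k\ge2}(dq_c)^k$ is also false for small $c$, and it breaks exactly where you claim that ``$dq_c<1$ together with $q_c<1/d\le1/2$ keeps the tail subordinate to this surplus.'' The condition $dq_c<1$ gives no uniform control of $1/(1-dq_c)$: from $P_k\le1$ and the identity of Theorem~\ref{theo} one gets $1\le cdq_c/(1-dq_c)$, i.e.\ $1-dq_c\le c\,dq_c$, so the tail weight $\sum_{k\ge4}(dq_c)^k=(dq_c)^4/(1-dq_c)$ is of order $1/c$ near criticality. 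Each deficit is of order $cq_c^3/(1-q_c)$, so the total weighted deficit is of order $q_c^3$, \emph{independent of $c$}, whereas your surplus $cq_c^2(dq_c)^2+c^2q_c^3(dq_c)^3$ vanishes with $c$. Numerically, at $c=0.1$, $d=2$ (where $q_c\approx0.457$, $dq_c\approx0.915$) the weighted deficit is about $0.10$ against a surplus of about $0.018$. Moreover, no tail estimate can close this, because the upper inequality of \eqref{eq:precise} is itself false for small $c$: expanding the identity of Theorem~\ref{theo} gives $dq_c=1-c+\frac{d}{d-1}c^2+O(c^3)$, and substituting into the right-hand side of \eqref{eq:precise} yields $d-\frac{c^2}{d(d-1)}+O(c^3)<d$ (for instance $c=0.01$, $d=2$ gives $q_c\approx0.495096$ and a right-hand side $\approx1.99996<2$). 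So the corollary's upper bound, the paper's proof of it, and your proposed proof are all only viable in a restricted range of $c$ --- in particular at $c=1$, where the pointwise bound $\prod_{i=1}^{n-1}(1-q^i)\ge1-q-q^2$ does hold and the paper's sandwich goes through; as stated for general $c\in(0,1]$, your argument cannot be completed.
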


\smallskip
In order to identify bounds for $q_c$ as a function of $c$ and $d$, we have to revert the above bounds in order to isolate $q_c$. We did this using the software \emph{Mathematica}. Although the obtained bounds are explicit functions of c and d, the expressions are not very friendly, nor really informative, so for the sake of simplicity we do not  include them here. The interested readers may find them in \cite[see Display (5) therein]{gallo/rodriguez/2017}, which is a previous version of this paper.
We used these expressions to obtain our tightest numerical bounds, which are presented in the columns ``Corollary \ref{cor:precise}'' of the tables in the sequel. For instance,
Table \ref{fig:cone_percolation} gives numerical bounds for our frog model in the case where $c=1$. 
 
The next corollary presents more explicit expressions.



\smallskip
\begin{coro}\label{theo2} We have, {for any $d\geq 3$,}
\[ 
\frac{1}{d(c+1)-(c/(c+1))^2}\le q_c (d)\le\frac{F(c,d)-\sqrt{F^2(c,d)-224c^2(c+1)^2}}{16c^2}
\]
where $F(c,d):=7d(c+1)^3-8 c^2$. The lower bound also holds for $d=2$.
\end{coro}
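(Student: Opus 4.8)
The plan is to obtain both inequalities directly from Corollary \ref{cor:precise}, by replacing the two square-root expressions that bound $d$ in \eqref{eq:precise} with elementary algebraic estimates and then inverting the resulting relations to isolate $q_c$. Throughout I write $x=\frac{4q_cc^2}{(c+1)^2}$ and $y=\frac{4c^2q_c(q_c+1)}{(c+1)^2}$ for the two radicands; both lie in $[0,1)$, since otherwise the corresponding square roots in \eqref{eq:precise} would not be real.

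For the lower bound I would start from the left inequality in \eqref{eq:precise}, namely $\frac{(c+1)(1-\sqrt{1-x})}{2q_c^2c^2}\le d$, and bound $1-\sqrt{1-x}$ from below. Using the binomial series $1-\sqrt{1-x}=\sum_{k\ge1}\bigl|\binom{1/2}{k}\bigr|x^k=\frac x2+\frac{x^2}{8}+\cdots$, whose coefficients are all positive, I keep the first two terms and discard the nonnegative tail, giving $1-\sqrt{1-x}\ge\frac x2+\frac{x^2}{8}$ for every $x\in[0,1]$. Substituting $x=\frac{4q_cc^2}{(c+1)^2}$, the two retained terms collapse: the linear term produces $\frac{1}{(c+1)q_c}$ and the quadratic term produces $\frac{c^2}{(c+1)^3}$, so the inequality upgrades to $\frac{1}{(c+1)q_c}+\frac{c^2}{(c+1)^3}\le d$. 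This is linear in $1/q_c$, and solving for $q_c$ gives exactly $q_c\ge\bigl(d(c+1)-(c/(c+1))^2\bigr)^{-1}$. Since this used only $x\le1$ and never restricted $d$, the lower bound holds for all $d\ge2$, as claimed.

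For the upper bound I would instead start from the right inequality in \eqref{eq:precise} and bound $1-\sqrt{1-y}$ from above via the sharp elementary estimate $1-\sqrt{1-y}\le\frac y2+\frac{y^2}{7}$. Inserting this and substituting $y=\frac{4c^2q_c(q_c+1)}{(c+1)^2}$, the powers of $c$ and of $(c+1)$ cancel as in the lower-bound computation, and after clearing denominators the right inequality in \eqref{eq:precise} turns into the quadratic inequality
\[
8c^2q_c^2-F(c,d)\,q_c+7(c+1)^2\ge0,\qquad F(c,d)=7d(c+1)^3-8c^2 .
\]
The quadratic $8c^2q^2-F q+7(c+1)^2$ opens upward with roots $\frac{F\pm\sqrt{F^2-224c^2(c+1)^2}}{16c^2}$; since $q_c<1/d$ by Theorem \ref{theo} while the larger root is of order $F/(8c^2)$ and hence far above $1/d$, the only admissible possibility is that $q_c$ lies below the smaller root, which is precisely the stated bound.

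The main obstacle is the validity range of the estimate $1-\sqrt{1-y}\le\frac y2+\frac{y^2}{7}$: a short computation (squaring and factoring out the double zero at $y=0$) shows the difference changes sign at the positive root of $y^2+7y-\frac74=0$, i.e. it holds only for $y\le y_0:=\sqrt{14}-\tfrac72\approx0.24$ and fails beyond. I must therefore check that $y=\frac{4c^2q_c(q_c+1)}{(c+1)^2}$ stays below $y_0$. The a priori bound $q_c<1/d$ alone is insufficient precisely at $d=3$ (and indeed fails for $d=2$, where $q_c$ is genuinely too large and $y$ escapes $[0,y_0]$, which is why the statement excludes that case). To close the gap I would bootstrap: first run the same inversion with the weaker but wide-range estimate $1-\sqrt{1-y}\le\frac y2+\frac{y^2}{4}$ (valid up to $y=2(\sqrt2-1)\approx0.83$, hence on the whole a priori range), obtaining a preliminary upper bound on $q_c$; substituting it back into $y$ confines $y$ below $y_0$ for all $d\ge3$ and $c\in(0,1]$, after which the $\tfrac17$-estimate applies and yields the final inequality. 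Verifying that this confinement is uniform in $c$ — the worst case being $c=1,\ d=3$ — is the one place where care, rather than a new idea, is needed.
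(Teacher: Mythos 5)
Your proposal is correct and follows the same core route as the paper: both directions start from Corollary~\ref{cor:precise}, apply the estimates $1-\sqrt{1-x}\ge \tfrac{x}{2}+\tfrac{x^2}{8}$ on $[0,1]$ (hence the lower bound for every $d\ge2$) and $1-\sqrt{1-y}\le \tfrac{y}{2}+\tfrac{y^2}{7}$ (valid only for $y\le\sqrt{14}-\tfrac72\approx0.24$), and then invert; your quadratic $8c^2q_c^2-F(c,d)\,q_c+7(c+1)^2\ge0$ is exactly what the paper means by ``reverting \eqref{eq:bound_better}'', and your root-selection argument (the larger root far exceeds $1/d>q_c$, so $q_c$ must sit below the smaller root) is the right justification for that inversion. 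The one place you genuinely diverge is the certification of the side condition $y=4c^2(c+1)^{-2}q_c(q_c+1)\le0.24$: the paper checks it at $c=1$ by appealing to the Mathematica-generated numerical values of Table~\ref{fig:cone_percolation} (its displayed check ``$q_c(q_c+1)\le0.06$'' should be read as $c^2(c+1)^{-2}q_c(q_c+1)\le 0.06$, i.e.\ $q_c(q_c+1)\le0.24$ when $c=1$), whereas you propose an analytic bootstrap via the wide-range estimate $1-\sqrt{1-y}\le\tfrac{y}{2}+\tfrac{y^2}{4}$. Your bootstrap does close: using $q_c<1/d\le\tfrac13$ and $c^2(c+1)^{-3}\le\tfrac18$ it gives $q_c\le\bigl((c+1)(d-\tfrac13)\bigr)^{-1}\le\tfrac{3}{8(c+1)}$, whence $y\le\tfrac{3c^2(8c+11)}{16(c+1)^4}\le\tfrac{57}{256}\approx0.223<0.24$ uniformly for $d\ge3$ and $c\in(0,1]$, the worst case being $c=1$, $d=3$ since $c^2(8c+11)(c+1)^{-4}$ is increasing on $(0,1]$. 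This makes your version self-contained where the paper relies on numerics, at the modest cost of the explicit monotonicity check in $c$ (which the paper's ``it is enough to check $c=1$'' glosses over, since $q_c$ itself depends on $c$); you also correctly observe that the upper bound must exclude $d=2$, where $y\approx0.35$ escapes the validity range of the $\tfrac17$-estimate.
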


\section{Applications}\label{sec:applications}

Now we discuss the applications of our results to some models of the literature. {While the applications that we will present in Subsections \ref{sec:original} and \ref{sec:removal} are direct consequences of the above results, the case of the  \emph{cone percolation}, that we first investigate, is an application of the method of proof. }

\subsection{Cone percolation  with geometric radius}

The first application we discuss is an improvement of both, upper and lower bounds, for the critical probability of a long range percolation model on $\overrightarrow{\mathbb{T}}_d$ called  cone percolation model, see  \cite{junior/machado/zuluaga/2014}. There is a random variable $X_v$ associated to each vertex $v\in\mathcal{V}:=\mathcal{V}(\mathbb{T}_d)$, representing a radius of propagation of a piece of information. The $X_v$'s are independent copies of $X\sim\textrm{Geo}(1-p)$, for some $p\in(0,1)$, and it is assumed that the information propagates through the graph as follows. At time $0$ only the root has the information. At time one all the vertices at distance of at most $X_{{\bf 0}}$ of the root of the tree are informed. At each step, each newly informed vertex $v$ will inform all non-informed vertices $v'>v$ such that $d(v,v')\le X_v$. \cite{junior/machado/zuluaga/2014} proved that there exists a critical value $p_c^{\text{\bf cp}}$ (the superscript ``cp'' stands for cone percolation) above which infinitely many vertices are informed (i.e., the model percolates) with positive probability. 

{Our interest is to compare this information propagation with the frog model on the directed tree, making $c=1$ and $q=p$ in \eqref{eq:Rn}.  A quick look to the first step of the proof of Theorem \ref{theo} shows that \eqref{eq:suf1} and \eqref{eq:suf2} are also valid using the dynamic of the cone percolation in place of our frog model. The remaining of the proof only relies on the dynamic along one single branch, which is the same in both models with our choice of the parameters. Thus, making  $c=1$ and $q=p$, the results of our frog model are valid for the geometric cone percolation, and we obtain the following result. }

\begin{prop}\label{coro:cone}
The critical parameter $p^{\text{\bf cp}}_c$ is the solution in $q$ of \eqref{eq:localization} with $c=1$. More explicitly,
we have $0.266667\le p_c^{\text{\bf cp}}(2)\le 0.277206$ and for $d\ge3$
\begin{align*}
\frac{1}{2d-\frac{1}{4}}\le p_c^{\text{\bf cp}}(d)\le \frac{(7d-1)(1-\sqrt{1-\frac{14}{(7d-1)^2}})}{2}.
\end{align*}
\end{prop}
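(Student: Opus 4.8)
The plan is to make rigorous the reduction sketched just before the statement: setting $c=1$ and $q=p$ in \eqref{eq:Rn} makes the single-branch dynamics of cone percolation coincide with those of the directed frog model. Indeed, in cone percolation a newly informed vertex $v$ informs a fixed descendant $u$ with $d(v,u)=n$ exactly when $X_v\ge n$, an event of probability $p^n$; under $c=1$, $q=p$ this is precisely $\mathbb{P}(R_n^v=u)=q^n$ from \eqref{eq:Rn}. My first step is therefore to revisit the opening of the proof of Theorem \ref{theo} and check that the two relations \eqref{eq:suf1} and \eqref{eq:suf2} used there involve only this one-branch law together with the branching number $d$, so that they transcribe verbatim to cone percolation once $c=1$, $q=p$. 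Granting this, the same fixed-point argument shows that $p_c^{\text{\bf cp}}$ satisfies \eqref{eq:localization} with $c=1$; since Theorem \ref{theo} guarantees a unique solution in $(0,1/d)$, I conclude that $p_c^{\text{\bf cp}}(d)$ coincides with the frog model's critical value $q_c$ evaluated at $c=1$, which is the first assertion.

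With this identity in hand, the explicit bounds for $d\ge3$ follow by specializing Corollary \ref{theo2} at $c=1$. The lower bound $1/\bigl(d(c+1)-(c/(c+1))^2\bigr)$ becomes $1/(2d-\tfrac14)$ at $c=1$; as Corollary \ref{theo2} records that this lower bound is also valid for $d=2$, it yields in particular $p_c^{\text{\bf cp}}(2)\ge 4/15\approx 0.266667$. For the upper bound I will compute $F(1,d)=7d\cdot 2^3-8=8(7d-1)$ and $224\,c^2(c+1)^2=896=64\cdot14$ at $c=1$, whence
\[
\frac{F(1,d)-\sqrt{F^2(1,d)-896}}{16}
=\frac{8(7d-1)-8\sqrt{(7d-1)^2-14}}{16}
=\frac{(7d-1)-\sqrt{(7d-1)^2-14}}{2},
\]
and factoring $(7d-1)$ out of the radical rewrites this as $\tfrac12(7d-1)\bigl(1-\sqrt{1-14/(7d-1)^2}\bigr)$, exactly the stated form.

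The only point not settled by a direct substitution is the upper bound at $d=2$, since the explicit upper bound of Corollary \ref{theo2} is restricted to $d\ge3$. Here I will instead use Corollary \ref{cor:precise}, i.e.\ the two-sided relation \eqref{eq:precise}, with $c=1$: it sandwiches $d$ between two functions of $q_c$ that are decreasing on $(0,1/d)$, so inverting its upper (right-hand) branch numerically at $d=2$ gives $p_c^{\text{\bf cp}}(2)\le 0.277206$, precisely the value produced by the \emph{Mathematica} computation mentioned after Corollary \ref{cor:precise}. I expect the single genuine obstacle to lie in the first step: one must verify carefully that the renewal argument underlying \eqref{eq:suf1} and \eqref{eq:suf2} really factors through the one-branch law, so that cone percolation shares the frog model's critical equation despite informing an entire cone—rather than following a single trajectory—at each step. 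Everything after that is elementary algebra and a one-dimensional numerical inversion.
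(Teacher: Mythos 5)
Your proposal follows the paper's own argument essentially verbatim: the paper justifies Proposition \ref{coro:cone} precisely by observing that \eqref{eq:suf1} and \eqref{eq:suf2} transcribe to cone percolation and that the single-branch (firework/renewal) dynamics coincides with the frog model's under $c=1$, $q=p$, then specializes Corollary \ref{theo2} for $d\ge3$ and numerically reverts Corollary \ref{cor:precise} for the $d=2$ upper bound. Your algebraic check that $F(1,d)=8(7d-1)$ and $224c^2(c+1)^2=896$ reduce the bound to $\tfrac{1}{2}(7d-1)\bigl(1-\sqrt{1-14/(7d-1)^2}\bigr)$, and the lower bound $4/15$ at $d=2$, are exactly right, so the proposal is correct and matches the paper's route.
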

These  bounds improve  the ones of \cite{junior/machado/zuluaga/2014} who obtained
\begin{equation}\label{eq:cone}
1/(2d)\le p_c^{\text{\bf cp}}(d)\le 1-\sqrt{1-1/d}.
\end{equation}
{We refer to Table \ref{fig:cone_percolation} for a  comparison between \eqref{eq:cone} and Proposition \ref{coro:cone}.}


\begin{table}[h]
{\footnotesize
\begin{center}
\begin{equation*}
\begin{array}{c|ccc|ccc|}
&  \multicolumn{3}{|c|}{\smash{\raisebox{.2\normalbaselineskip}{\text{Lower Bounds}}}} &  \multicolumn{3}{|c|}{\smash{\raisebox{.2\normalbaselineskip}{\text{Upper Bounds}}}}\\
d & \text{Corollary \ref{cor:precise}} & \text{Proposition \ref{coro:cone}} & \text{Known (2014)} & \text{Corollary \ref{cor:precise}} & \text{Proposition \ref{coro:cone}} & \text{Known (2014)}\\\hline
2 & 0.269594 & 0.266667 & 0.250000 & 0.277206 & 0.277206 & 0.292893\\
3 & 0.174659 & 0.173913 & 0.166667 & 0.176343 & 0.176559 & 0.183503 \\
4 & 0.129326 & 0.129032 & 0.125000 & 0.129961 & 0.130258& 0.133975 \\
5 & 0.102709 & 0.102564 & 0.100000 & 0.103015 & 0.103255& 0.105573 \\
6 & 0.085188 & 0.085106 & 0.083333 & 0.085358 & 0.085544 & 0.087129 \\
7 & 0.072777 & 0.072727&0.071428 & 0.072882 & 0.073027 & 0.074179\\
8 & 0.063525 & 0.063492 & 0.062500 & 0.063594 & 0.063710 &0.064585\\
9 & 0.056361 & 0.056338 & 0.055556 & 0.056408 & 0.056503 & 0.057191\\
10 & 0.050649 & 0.050632 & 0.050000 & 0.050684 & 0.050762 &0.051316\\
15 & 0.033618 & 0.033613 & 0.033333 & 0.033628 & 0.033664 & 0.033908\\
20 & 0.025159 & 0.025157 &0.025000 & 0.025163 & 0.025184 & 0.025320\\
30 & 0.016737 & 0.016736 & 0.016667& 0.016738 & 0.016748 & 0.016807\\
50 & 0.010025 & 0.010025 & 0.010000 & 0.010025 & 0.010028 & 0.010050\\
100 & 0.005006 & 0.005006 & 0.005000 & 0.005006& 0.005007 & 0.005012\\\hline\end{array}
\end{equation*}
\end{center}}
\caption{{The first and the forth columns present numerical values obtained reverting Corollary \ref{cor:precise} with \emph{Mathematica} when $c=1$. The columns ``Known (2014)'' use Display \eqref{eq:cone}.}
\label{fig:cone_percolation}}
\end{table}

\subsection{Improved upper bounds for the {original}  frog model and its self-avoiding version}\label{sec:original}

Here we present an improvement of the known upper bounds for the critical parameter of the original frog model, as well as the self-avoiding frog model, on $\mathbb{T}_d$. Let us start with the frog model with one-per-site  configuration, i.i.d. geometric lifetimes of parameter $1-p$, for some $p\in (0,1)$, and denote by $p^{\text{\bf o}}_c(d)$ its critical parameter, for $d\geq 2$ (the superscript ``o'' stands for ``original'', to point out that it refers to the original model). A useful result to obtain an upper bound for $p^{\text{\bf o}}_c(d)$ is Lemma 2.1 of \cite{lebensztayn/machado/popov/2005}. It states that, for any two vertices $u$ and $v$  {such that $u<v$ and $d(u,v)=n\ge1$, vertex $v$} will be visited by the active particle starting at $u$ with probability $r^n$, where

\[
r=r(p):=\frac{d+1-\sqrt{(d+1)^2-4dp^2}}{2dp}.
\]
Our frog model on $\overrightarrow{\mathbb{T}}_d$ with $c=1$ and $q=r(p)$ can be coupled to the ``original frog model'' in such a way that our model is \emph{below}. To do this, we start with the  one-per-site configuration for both models (our on $\overrightarrow{\mathbb{T}}_d$ and the ``original'' on $\mathbb{T}_d$) and we realize both processes in such a way that an active particle hits a given vertex in $\overrightarrow{\mathbb{T}}_d$ only if the corresponding particle on $\mathbb{T}_d$ also hits this vertex. 
Thus, if our frog model on $\overrightarrow{\mathbb{T}}_d$ with $q=r(p)$ and $c=1$ survives,  the ``original'' frog model with $p$ survives as well. So we can use the upper bound of Corollary \ref{theo2} {(and revert Corollary \ref{cor:precise} for $d=2$}) to get the following result.

\begin{prop}\label{coro:original} We have $p_c^{\text{\bf o}}(2)\le 0.7208{36}$ and for $d\ge3$
\begin{align*}
&p_c^{\text{\bf o}}(d)\le \frac{(d+1)[(7d-1)-\sqrt{(7d-1)^2-14}] }{d(7d-1)^2-7d+2-d(7d-1)\sqrt{(7d-1)^2-14}}.
\end{align*}
\end{prop}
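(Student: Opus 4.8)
The plan is to turn the coupling described just before the statement into a quantitative bound by inverting the function $r=r(p)$. First I would record the algebraic inverse of the relation $q=r(p)$. Starting from $q=\frac{d+1-\sqrt{(d+1)^2-4dp^2}}{2dp}$, isolating the square root, squaring, and cancelling the common factor $4dp$ yields the clean relation
\[
p=g(q):=\frac{(d+1)q}{1+dq^2}.
\]
I would then check that $g$ is strictly increasing on $(0,1/\sqrt d)$, since $g'(q)=\frac{(d+1)(1-dq^2)}{(1+dq^2)^2}>0$ there, and note that $r$ maps $(0,1)$ into $(0,1/d)\subset(0,1/\sqrt d)$ (indeed $r(1)=1/d$), so that $g$ is genuinely the inverse of $r$ on the relevant range. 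This inversion is the computational heart of the argument; everything else is monotonicity bookkeeping.

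Next I would assemble the comparison. The coupling shows that whenever the directed frog model on $\overrightarrow{\mathbb{T}}_d$ with $c=1$ and $q=r(p)$ survives, the original model with parameter $p$ survives as well; and the directed model survives as soon as $r(p)>q_c(1,d)$, where by Theorem \ref{theo} we have $q_c(1,d)\in(0,1/d)$. Writing $\bar q(d)$ for the explicit upper bound of Corollary \ref{theo2} with $c=1$, namely $\bar q(d)=\frac{(7d-1)-\sqrt{(7d-1)^2-14}}{2}$, one verifies $q_c(1,d)\le\bar q(d)<1/\sqrt d$. Hence for every $p>g(q_c(1,d))$ we have $r(p)>q_c(1,d)$, so the original model survives and $p\ge p_c^{\text{\bf o}}(d)$; taking the infimum and using that $g$ is the increasing inverse of $r$ gives
\[
p_c^{\text{\bf o}}(d)\le g\bigl(q_c(1,d)\bigr)\le g\bigl(\bar q(d)\bigr),\qquad d\ge3.
\]

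Then I would carry out the substitution $\bar q(d)=\frac{A-B}{2}$ with $A:=7d-1$ and $B:=\sqrt{A^2-14}$ into $g(\bar q)=\frac{(d+1)\bar q}{1+d\bar q^2}$. Using $B^2=A^2-14$ to rewrite $(A-B)^2=2A^2-14-2AB$, the denominator $1+d\bar q^2=\frac{4+d(A-B)^2}{4}$ becomes $\frac{2dA^2-14d+4-2dAB}{4}$, and after a common factor of $2$ cancels this produces exactly
\[
p_c^{\text{\bf o}}(d)\le\frac{(d+1)\bigl[(7d-1)-\sqrt{(7d-1)^2-14}\bigr]}{d(7d-1)^2-7d+2-d(7d-1)\sqrt{(7d-1)^2-14}},
\]
which is the claimed bound for $d\ge3$. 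Finally, for $d=2$ the explicit bound of Corollary \ref{theo2} is unavailable, so I would instead revert Corollary \ref{cor:precise} numerically (as reported in Table \ref{fig:cone_percolation}) to obtain $q_c(1,2)\le0.277206$ and evaluate $g(0.277206)=\frac{3\cdot0.277206}{1+2\cdot0.277206^2}\approx0.720836$. The only point demanding genuine care is confirming that the inequality survives the inversion, i.e.\ that both $q_c(1,d)$ and $\bar q(d)$ lie in the increasing regime $(0,1/\sqrt d)$ of $g$; once this is secured, the passage from the $q$-bound to the $p$-bound is automatic and the remaining work is purely algebraic.
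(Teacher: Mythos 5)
Your proposal is correct and follows essentially the same route as the paper: couple the directed model with $c=1$, $q=r(p)$ below the original model, bound $q_c(1,d)$ by the upper bound of Corollary \ref{theo2} for $d\ge3$ (reverting Corollary \ref{cor:precise} numerically for $d=2$), and invert $r$ via $p=\frac{(d+1)q}{1+dq^2}$. The paper leaves the inversion algebra and the monotonicity check of $g$ on $(0,1/\sqrt d)$ implicit, and your write-up simply supplies those details correctly.
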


\smallskip
This upper bound improves the one of \cite{lebensztayn/machado/popov/2005} who obtained 
\begin{equation}\label{lit_o}
p_c^{\text{\bf o}}(d)\le (d+1)/(2d).
\end{equation}
The left halfpart of Table \ref{fig:original_frog} gives numerical comparison between the bounds. We point out that their bound was an improvement over that of \cite{fontes/machado/sarkar/2004} who obtained $p_c^{\text{\bf o}}(d)\le(d+1)/(2d-2)$.

 
 \smallskip
Now we consider the self-avoiding version of the frog model on $\mathbb{T}_d$ which was introduced in \cite{lebensztayn/machado/martinez/2006}. The only difference with respect to the preceding model is that each particle performs a  self-avoiding random walk on $\mathbb{T}_d$ when it is activated. 
Again, a coupling argument allows us to compare this model with the frog model on $\overrightarrow{\mathbb{T}}_d$, but taking now $c=d/(d+1)$ and $q=p/d$ in \eqref{eq:Rn}. In the coupled versions,  we remove the activated particles of $\overrightarrow{\mathbb{T}}_d$ for which the corresponding particle on $\mathbb{T}_d$ jumps in the direction of the root. In any other case both particles do the same trajectory away from the root and with the same geometric lifetimes. 
We can see that survival for our model implies survival for the self-avoiding frog model. So here also, {denoting by $p_c^{\text{\bf sa}}(d)$ (the superscript ``sa'' stands for ``self-avoiding'') the critical parameter of the self-avoiding model,} 
we can use the upper bound of Corollary \ref{theo2} {(and revert Corollary \ref{cor:precise} for $d=2$}) to get the following result.

\begin{prop} \label{coro:selfrep}
We have $p_c^{\text{\bf sa}}(2)\le0.648046$ and for $d\ge3$
\begin{align*}
&p_c^{\text{\bf sa}}(d)\le(d+1)^2\frac{F(d/(d+1),d)-\sqrt{F^2(d/(d+1),d)-224c^2(d/(d+1)+1)^2}}{16d}.
\end{align*}
where we recall that where $F(c,d):=7d(c+1)^3-8 c^2$.
\end{prop}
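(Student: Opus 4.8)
The plan is to turn the coupling sketched just above Proposition \ref{coro:selfrep} into the clean inequality $p_c^{\text{\bf sa}}(d)\le d\,q_c(d/(d+1),d)$, and then feed the latter into the estimates already proved in Corollary \ref{theo2} and Corollary \ref{cor:precise}. First I would pin down the two reaching probabilities, so as to justify the choice $c=d/(d+1)$ and $q=p/d$. Consider a self-avoiding particle activated at a vertex $v$ (with a parent and $d$ children) and a fixed descendant $u>v$ with $d(v,u)=n$. Since on a tree a self-avoiding trajectory is exactly a non-backtracking one, the particle reaches $u$ before dying precisely when three independent things happen: it lives long enough to make $n$ steps, of probability $p^{n}$; its first step picks the child of $v$ on the geodesic towards $u$, of probability $1/(d+1)$; and each of the remaining $n-1$ steps selects the correct forward neighbour among the $d$ available ones, of probability $d^{-(n-1)}$. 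Multiplying, the reaching probability equals
\[
p^{n}\,\frac{1}{d+1}\,\frac{1}{d^{\,n-1}}=\frac{d}{d+1}\left(\frac{p}{d}\right)^{n},
\]
which is exactly $cq^{n}$ from \eqref{eq:Rn} with $c=d/(d+1)$ and $q=p/d$.

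This identity is what lets me realize both processes on one probability space. I would kill every particle of the directed model whose self-avoiding counterpart takes its initial step towards the root (an event of probability $1/(d+1)$, precisely the defect absorbed into the factor $c=d/(d+1)$), and otherwise couple the two particles to follow identical away-from-the-root trajectories with identical geometric lifetimes. Under this construction the set of vertices activated in $\overrightarrow{\mathbb{T}}_d$ is contained in the set activated by the self-avoiding model on $\mathbb{T}_d$. Consequently survival of the directed frog model forces survival of the self-avoiding one, so the self-avoiding model survives with positive probability whenever $p/d>q_c(d/(d+1),d)$, which yields
\[
p_c^{\text{\bf sa}}(d)\le d\,q_c\!\left(\tfrac{d}{d+1},d\right).
\]

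It then remains to insert the known bounds on $q_c$. For $d\ge 3$ I would take the upper bound of Corollary \ref{theo2} at $c=d/(d+1)$ and multiply by $d$. Using $c^{2}=d^{2}/(d+1)^{2}$ one gets the clean simplification $d/(16c^{2})=(d+1)^{2}/(16d)$, and substituting $c=d/(d+1)$ into $F(c,d)$ and into $224c^{2}(c+1)^{2}$ produces exactly the claimed expression; this part is purely algebraic. For $d=2$ the upper bound of Corollary \ref{theo2} is unavailable, so instead I would revert the numerical bound of Corollary \ref{cor:precise} at $c=2/3$ to estimate $q_c(2/3,2)$ and conclude $p_c^{\text{\bf sa}}(2)\le 2\,q_c(2/3,2)\le 0.648046$.

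The main obstacle is the coupling itself: one must verify carefully that the truncated walks $(R^{v}_{n})_{n\ge0}$ on $\overrightarrow{\mathbb{T}}_d$ can be realized jointly, particle by particle, with the self-avoiding walks on $\mathbb{T}_d$ so that every surviving directed particle reproduces exactly the forward portion of its self-avoiding partner — matching the geometric lifetimes step by step, and using that on a tree the self-avoidance constraint coincides with the automatic forward motion on the directed tree. This is exactly the place where the factorization $cq^{n}=\frac{d}{d+1}(p/d)^{n}$ must be checked against \eqref{eq:Rn}; once the reaching-probability identity and the resulting activation inclusion are established, the remainder reduces to quoting the earlier corollaries and simplifying.
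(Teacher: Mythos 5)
Your proposal is correct and takes essentially the same route as the paper: the identical coupling (kill directed particles whose self-avoiding counterpart first steps toward the root, match forward trajectories and lifetimes) justifying $c=d/(d+1)$, $q=p/d$ in \eqref{eq:Rn}, followed by the upper bound of Corollary \ref{theo2} for $d\ge3$ and a numerical reversion of Corollary \ref{cor:precise} for $d=2$. You merely spell out what the paper leaves implicit, namely the reaching-probability identity $\frac{1}{d+1}d^{-(n-1)}p^{n}=\frac{d}{d+1}\left(\frac{p}{d}\right)^{n}$ and the algebraic simplification $d/(16c^{2})=(d+1)^{2}/(16d)$.
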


 \smallskip
This upper bound improves the one of \cite{lebensztayn/machado/martinez/2006}, who obtained
\begin{equation}\label{lit_s_a}
p_c^{\text{\bf sa}}(d)\le {\frac{2d+1-\sqrt{4d^2-3}}{2}}. 
\end{equation}
We refer to Table \ref{fig:original_frog} for numerical comparisons between \eqref{lit_s_a} and Proposition \ref{coro:selfrep}.
  \begin{table}[h]
{\footnotesize
\begin{center}
\begin{equation*}
\begin{array}{c|ccc|ccc|}
&  \multicolumn{3}{|c|}{\smash{\raisebox{.2\normalbaselineskip}{\text{Upper bound ``original frog'' model}}}} &  \multicolumn{3}{|c|}{\smash{\raisebox{.2\normalbaselineskip}{\text{Upper bound ``Self-avoiding frog'' model}}}}\\
d & \text{Corollary \ref{cor:precise}} & \text{Proposition \ref{coro:original}} & \text{Known (2005)} & \text{Corollary \ref{cor:precise}} & \text{Proposition \ref{coro:selfrep}} & \text{Known (2006)}\\\hline
2 & 0.720836 & 0.7208{36} & 0.750000 & 0.648045 & 0.648045 & 0.697224\\
3 &  0.645182 & 0.645837 & 0.666667 & 0.599063 & 0.600229 & 0.627719\\
4 &   0.608681 & 0.609897 & 0.625000  & 0.574870 & 0.576225 & 0.594875\\
5 & 0.586944 &  0.588174&0.600000 &  0.560271 & 0.561544 & 0.575571\\
6 & 0.572482 & 0.573624& 0.583333 & 0.550468 & 0.551621 & 0.562829\\ 
7 & 0.562156 & 0.563197&0.571429 & 0.543421 & 0.544461 & 0.553778\\  
8 & 0.55441 &0.555358 & 0.562500 & 0.538107 & 0.539048 & 0.547013\\ 
9 & 0.548384 & 0.549249 & 0.555556 &0.533955 & 0.534812 & 0.541764\\ 
10 & 0.543561 & 0.544355 & 0.550000 &  0.530620 & 0.531406 & 0.537571\\ 
15 & 0.529076 &0.529632 &0.533333  & 0.520543 & 0.521093 & 0.525021\\ 
20 &0.521822  & 0.522248 & 0.525000 &0.515458 & 0.515881 & 0.518759\\ 
30 & 0.514559 & 0.514848 &  0.516667 & 0.510341 & 0.510628 & 0.512503\\ 
50 & 0.508741 & 0.508917 & 0.510000 &0.506222 & 0.506397 & 0.507501\\ 
100 & 0.504373 & 0.504461 & 0.505000 &0.503118 & 0.503206 & 0.503750\\\hline
\end{array}
\end{equation*}
\end{center}
\caption{Numerical values of upper bounds for the {original} frog model and its self-avoiding version. The third and the sixth columns are the bounds obtained respectively by (Lebensztayn et al., 2005) and (Lebensztayn et al., 2006).}
\label{fig:original_frog}}
\end{table} 

\subsection{Frog model with removal at visited vertices}\label{sec:removal}

In the previous subsection we have presented some improvements for the original frog model and its self-avoiding version on $\mathbb{T}_d$. As a final application of our results we discuss another version of the frog model which has not been explored on infinite graphs. It is a frog model with $i.i.d.$ geometric lifetimes of parameter $1-p$, in which we remove any particle which did not activate anybody for $L$ times, where $L\geq 1$. This modification was suggested by \cite{popov/2003} and as far as we known the only rigorous results on infinite graphs are proved in $\mathbb{Z}$ for some related models, see for instance \cite{Lebensztayn/Machado/Martinez/2016} and references therein. On the other hand, the model has been well studied on some finite graphs. In such case, the issue of interest is the study of the final proportion of visited vertices at the end of the process. In this direction, \cite{Alves2006} obtained the first results for the model defined on a complete graph with $L=1$ and $p=1$ by mean of a mean field approximation analysis and computational simulations. They work was generalized later by \cite{Kurtz/Lebensztayn/Leichsenring/Machado/2008} for $L\geq 1$ and $p=1$ in the form of limit theorems obtained for the proportion of visited vertices at the absorption time of the process as the size of the graph goes to $\infty$. More recently, \cite{Lebensztayn/Rodriguez/2013} stated the connection between this model and the well known Maki-Thompson rumor model (see \cite{lebensztayn/machado/rodriguez/2011b}).  {In view of the connection obtained by \cite{Lebensztayn/Rodriguez/2013}, one may consider the model presented here as a rumor process on a moving population.}    

{If we consider this model on $\mathbb{T}_d$, starting from a one-per-site  configuration then, a realization of the resulting process {when $L=1$} coincides with a realization of our general frog model on $\overrightarrow{\mathbb{T}}_d$, making $c=1$ and $q=p/(d+1)$ in \eqref{eq:Rn}.
Therefore, denoting by $p^{\text{\bf r}}_c(d)$ the  critical parameter of this model  (the superscript ``r'' stands for ``removal'') with $L=1$, we directly obtain the following proposition. }

\begin{prop} The critical parameter $p^{\text{\bf r}}_c$ is the solution in $p$ of \eqref{eq:localization} with $c=1$ and $q=p/(d+1)$. More explicitly,
we have $0.8\le p_c^{\text{\bf r}}(2)\le0.831619$, and for $d\ge3$
\begin{align*}
\frac{d+1}{2d-1/4}\le p_c^{\text{\bf r}}(d)\le \frac{(d+1)(7d-1)(1-\sqrt{1-\frac{14}{(7d-1)^2}})}{2}.
\end{align*}

\end{prop}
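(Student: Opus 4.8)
The plan is to reduce the statement to the results already established for the frog model on $\overrightarrow{\mathbb{T}}_d$ by making precise the announced realization-level correspondence, and then to read off the bounds by specializing the earlier corollaries to $c=1$ and $q=p/(d+1)$.

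First I would justify the correspondence by tracking a single active particle of the removal model. Starting from the one-per-site configuration, when a particle born at $v$ takes a step it survives its geometric clock with probability $p$ and then jumps uniformly onto one of the $d+1$ neighbours of its current position. Because the rule with $L=1$ removes any particle that fails to activate a new vertex, the particle is discarded as soon as it jumps backwards (towards the root, an already visited vertex) or onto an already activated site; along a fresh branch this means it advances to a prescribed forward child with probability $p\cdot\frac{1}{d+1}=\frac{p}{d+1}$ and is kept alive (i.e. lands on some new child) with probability $p\cdot\frac{d}{d+1}=\frac{dp}{d+1}$. Hence it reaches a given vertex at forward distance $n$ with probability $\bigl(\frac{p}{d+1}\bigr)^n$ and survives $n$ forward steps with probability $\bigl(\frac{dp}{d+1}\bigr)^n$. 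Putting $c=1$ and $q=p/(d+1)$, these are exactly the laws $\mathbb{P}(R_n^v=u)=cq^n$ of \eqref{eq:Rn} and $\mathbb{P}(\mathcal{T}_v\ge n)=c(dq)^n$ prescribed on $\overrightarrow{\mathbb{T}}_d$. This computation, together with the fact that on a tree each particle traces a single forward path whose branch dynamics do not feel the removals occurring on other branches, is the heart of the argument and the step I expect to be the most delicate: one must check that the backward jumps and the $L=1$ removal are faithfully absorbed into the effective lifetime $\mathcal{T}_v$, so that the two processes agree along every single branch.

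With the correspondence in hand I would invoke Theorem \ref{theo} exactly as in the cone percolation case: the inequalities \eqref{eq:suf1}--\eqref{eq:suf2} from the first step of its proof remain valid for the removal dynamics, and the remainder of the proof uses only the dynamics along one single branch, which we have just matched to the directed model with $c=1$ and $q=p/(d+1)$. Consequently the removal model survives if and only if the frog model on $\overrightarrow{\mathbb{T}}_d$ with those parameters survives, so $p_c^{\text{\bf r}}(d)=(d+1)\,q_c(d,1)$ and $p_c^{\text{\bf r}}$ is the solution in $p$ of \eqref{eq:localization} with $c=1$ and $q=p/(d+1)$.

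Finally I would transfer the quantitative bounds through this identity. For $d\ge 3$ I would set $c=1$ in Corollary \ref{theo2}, where $F(1,d)=8(7d-1)$; the lower bound reads $q_c(d,1)\ge \frac{1}{2d-1/4}$, and after the simplification $(7d-1)-\sqrt{(7d-1)^2-14}=(7d-1)\bigl(1-\sqrt{1-14/(7d-1)^2}\bigr)$ the upper bound reads $q_c(d,1)\le \frac{(7d-1)(1-\sqrt{1-14/(7d-1)^2})}{2}$; multiplying both by $(d+1)$ yields the displayed estimates. For $d=2$ the lower bound of Corollary \ref{theo2} still applies and gives $q_c(2,1)\ge \frac{1}{2\cdot 2-1/4}=\frac{4}{15}$, while the upper bound comes from reverting Corollary \ref{cor:precise} numerically as $q_c(2,1)\le 0.277206$; multiplying by $d+1=3$ gives $0.8\le p_c^{\text{\bf r}}(2)\le 0.831619$, which completes the proof.
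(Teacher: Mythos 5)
Your proposal is correct and follows essentially the same route as the paper: the paper's own justification is precisely the asserted realization-level coincidence of the $L=1$ removal model with the directed frog model at $c=1$, $q=p/(d+1)$, followed by ``directly'' specializing Theorem \ref{theo} and the $c=1$ bounds (i.e., the bounds of Proposition \ref{coro:cone} multiplied by $d+1$, with the $d=2$ value obtained by reverting Corollary \ref{cor:precise} numerically). In fact you supply more detail than the paper does — the single-particle computation matching $\mathbb{P}(R_n^v=u)=q^n$ and $\mathbb{P}(\mathcal{T}_v\ge n)=(dq)^n$, and the remark that only the one-branch dynamics enter via \eqref{eq:suf1}--\eqref{eq:suf2} — and your algebra, including $F(1,d)=8(7d-1)$ and $p_c^{\text{\bf r}}(d)=(d+1)q_c(d,1)$, checks out.
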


\section{Interlude: Renewal convergence rates}\label{sec:renewal}

Our proofs will use a parallel between information propagation on $\bN$ and undelayed renewal sequences. For this reason we dedicate this section to the description of some aspects of renewal theory. Let ${\bf T} = (T_n)_{n\ge1}$ be an i.i.d. sequence of  r.v's, taking values in $\{1,2,\ldots\}\cup\{\infty\}$ with common distribution $(f_k)_{k\in\{1,2,\ldots\}\cup\{\infty\}}$. The undelayed renewal sequence is the $\{0,1\}$-valued stochastic chain ${\bf Y}=(Y_n)_{n\ge0}$ defined through $Y_0=1$ and, for any $n\ge1$, $Y_n={\bf1}\{T_1+\ldots+T_i=n\,\,\textrm{for some}\,\,i\}$. The distribution $(f_k)_{k\in\{1,2,\ldots\}\cup\{\infty\}}$ is called inter-arrival distribution. 
 The well-known renewal theorem states that 
\[
\mathbb{P}(Y_n=1)\rightarrow \frac{1}{\mathbb{E}T},
\]
with the convention that $1/\infty=0$.  The question of  identifying the rate at which this convergence holds (renewal convergence rate), based on the inter-arrival distribution is a very classical one (see the introduction of \cite{giacomin/2008} for a rapid survey). 

For our purposes, let us first observe that  the renewal property implies that
\begin{align*}
\mathbb{P}(Y_{n}=1)\mathbb{P}(Y_m=1)&=\mathbb{P}(Y_{n+m}=1|Y_{n=1})\mathbb{P}(Y_n=1)\\&=\mathbb{P}(Y_n=1,Y_{n+m}=1)\\&\le \mathbb{P}(Y_{n+m}=1).
\end{align*}
This means in particular that $\log \mathbb{P}(Y_n=1)$ is super-additive, thus by Fekete's lemma, $\lim_n\frac{1}{n}\log \mathbb{P}(Y_n=1)$ exists. We introduce the renewal convergence rate of the process, $\gamma$, defined through
\[
\log\gamma:=-
\lim_n\frac{1}{n}\log \mathbb{P}(Y_n=1).
\] 
Naturally, the value of  $\log\gamma$ depends on the inter-arrival distribution. Here we focus (having in mind a future application to the frog model described in Section \ref{ss:frogs_trees}) on inter-arrival distribution having the property that the hazard rate
\[
h_k:=\frac{f_k}{\sum_{i\ge k}f_i}=c q^k ,\,\,k\ge1
\]
for some $c>0$ and $q\in(0,1)$. Reversely, we have the following expression for $f_k$, 
\begin{equation}\label{eq:distribution}
f_k=c q^k\prod_{i=1}^{k-1}(1-c q^i)\,,\,\,\,\,k\ge1
\end{equation}
with the convention that $\prod_{i=1}^{0}(1-c q^i)=1$. This is a defective probability distribution, since 
\begin{equation}\label{eq:distribution2}
\mathbb{P}(T\ge n)=\sum_{k\ge n}f_k=\prod_{i=1}^{n-1}(1-c q^i)\rightarrow \prod_{i\ge1}(1-c q^i)>0.
\end{equation}
So we have that $f_\infty:=\mathbb{P}(T=\infty)=\prod_{i\ge1}(1-c q^i)>0$. 

\smallskip
The proofs of our results will make use of the two following lemmas. 

\begin{lem}\label{prop:renewal} For a renewal process with inter-arrival distribution given by \eqref{eq:distribution} we have
 \[
 \sum_{k\ge1}\gamma^{k}cq^k\prod_{i=1}^{k-1}(1-c q^i)=1.
 \]
\end{lem}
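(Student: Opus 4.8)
The plan is to pass to generating functions and to compute the radius of convergence of the renewal generating function in two different ways. Write $u_n:=\mathbb{P}(Y_n=1)$. The undelayed renewal structure yields the renewal equation $u_0=1$ and $u_n=\sum_{k=1}^n f_k u_{n-k}$ for $n\ge1$. Introducing $U(s):=\sum_{n\ge0}u_n s^n$ and $F(s):=\sum_{k\ge1}f_k s^k$, this convolution identity becomes $U(s)=1/(1-F(s))$ in a neighbourhood of the origin. Since the asserted identity $\sum_{k\ge1}\gamma^k f_k=1$ is precisely the statement $F(\gamma)=1$, it suffices to show that $\gamma$ is the first positive zero of $1-F$. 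Note also that $f_k>0$ for every $k$ (each factor $1-cq^i$ lies in $(0,1)$), so $u_n\ge f_n>0$ and $\log u_n$ is well defined.

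First I would pin down the radius of convergence of $U$. By the definition of the renewal convergence rate, $-\log\gamma=\lim_n\frac1n\log u_n$, hence $u_n^{1/n}\to 1/\gamma$, and Cauchy--Hadamard gives that $U$ has radius of convergence exactly $\gamma$; since the $u_n$ are nonnegative, $s=\gamma$ is in fact a genuine singularity of $U$ by Pringsheim's theorem. Next I would analyze $F$ on the positive real axis. Because $\prod_{i=1}^{k-1}(1-cq^i)\to f_\infty>0$ by \eqref{eq:distribution2}, we have $f_k\sim c f_\infty q^k$, so $F$ has radius of convergence $1/q>1$. On $(0,1/q)$ the function $F$ is continuous and strictly increasing (positive coefficients), satisfies $F(1)=1-f_\infty<1$ by \eqref{eq:distribution2}, and diverges to $+\infty$ as $s\uparrow 1/q$ by comparison with the geometric tail $\sum_k c f_\infty (qs)^k$. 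By the intermediate value theorem there is therefore a unique $s^\ast\in(1,1/q)$ with $F(s^\ast)=1$.

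Finally I would match the two descriptions. For $|s|<s^\ast$ one has $|F(s)|\le F(|s|)<F(s^\ast)=1$, so $1-F(s)\neq0$ and $U=1/(1-F)$ is analytic on the disc $|s|<s^\ast$; at the point $s=s^\ast<1/q$ the denominator vanishes while $F$ remains analytic, so $U$ has a pole there. Hence the radius of convergence of $U$ is exactly $s^\ast$. Combining this with the computation of the previous paragraph gives $\gamma=s^\ast$, and therefore $F(\gamma)=1$, which is the desired identity.

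The step I expect to be the main obstacle is this last matching: one must check that the first singularity of $U$ on the positive axis is genuinely the first \emph{real} zero of $1-F$, rather than the boundary $1/q$ of the domain of $F$, and that no complex zero of $1-F$ occurs at smaller modulus. The strict monotonicity of $F$ together with the tail comparison forcing $F(s)\to+\infty$ as $s\uparrow 1/q$ is exactly what makes this rigorous; the remainder is the standard renewal generating-function dictionary.
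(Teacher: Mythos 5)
Your proof is correct, and it takes a genuinely different route from the paper's for the heart of the lemma. Both arguments share the same first step: using $f_k\sim c f_\infty q^k$ (via \eqref{eq:distribution2}) to see that $F(s)=\sum_k f_k s^k$ has radius of convergence $1/q>1$, that $F(1)=1-f_\infty<1$, and that $F$ exceeds $1$ before $1/q$, so the intermediate value theorem yields a root $s^\ast\in(1,1/q)$ of $F(s)=1$ (the paper borrows this step from Bressaud--Fern\'andez--Galves, and your tail-comparison showing $F(s)\to+\infty$ as $s\uparrow 1/q$ is the same mechanism). Where you diverge is the identification of $s^\ast$ with $\gamma$: the paper invokes Theorem 3.5 of Br\'emaud, a renewal theorem for defective inter-arrival distributions, which delivers the exact asymptotics $\lim_n (s^\ast)^n\mathbb{P}(Y_n=1)=\bigl(\sum_n n (s^\ast)^n f_n\bigr)^{-1}\in(0,\infty)$ and hence $\log\gamma=\log s^\ast$ immediately; you instead give a self-contained generating-function argument, computing the radius of convergence of $U=1/(1-F)$ twice --- it equals $\gamma$ by Cauchy--Hadamard applied to the definition $\log\gamma=-\lim_n\frac{1}{n}\log u_n$, and it equals $s^\ast$ because positivity of the coefficients gives $|F(s)|\le F(|s|)<1$ for $|s|<s^\ast$ (ruling out complex zeros of $1-F$ of smaller modulus) while $1/(1-F)$ has a pole at $s^\ast<1/q$. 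Your route buys independence from the black-box citation, using only elementary power-series facts (Pringsheim is a nice touch, though not strictly needed: if the radius of $U$ exceeded $s^\ast$, evaluating the identity $U(s)(1-F(s))=1$ at $s=s^\ast$ would give $0=1$ directly); the paper's route is shorter given the reference and yields the sharper conclusion $u_n\asymp \gamma^{-n}$ with an identified constant, of which only the exponential rate is actually used in Lemma \ref{prop:renewal}. The potential obstacle you flag --- that the first singularity of $U$ might be the boundary $1/q$ or a complex zero --- is exactly the right concern, and your monotonicity plus modulus bound disposes of it rigorously.
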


\begin{proof}
Theorem 3.5 of \cite{bremaud/1999} states that, if for some defective distribution $(f_k)_{k\in\{1,2,\ldots\}\cup\{\infty\}}$ there exists some $\alpha>1$ such that
\begin{equation}\label{eq:condition_defective}
F(\alpha):=\sum_{n\ge1}\alpha^nf_n=1
\end{equation}
then the limit $\lim_n\alpha^n\mathbb{P}(Y_n=1)=(\sum_nn\alpha^nf_n)^{-1}$. 

In our case (recall that $f_n,n\ge1$ satisfies \eqref{eq:distribution}) we can prove that \eqref{eq:condition_defective} actually holds for some $\alpha$. This fact is proved for instance in \cite[Proof of Proposition 2(iii)]{bressaud/fernandez/galves/1999a}. For completeness we include the argument here. Observe first that $F(1)=\mathbb{P}(T<\infty)<1$. Moreover, by \eqref{eq:distribution} and \eqref{eq:distribution2}, we have that $f_n/(c q^n)\rightarrow \prod_{i\ge1}(1-c q^i)>0$, meaning in particular that the radius of convergence of $F$, $\lim_nf_n^{-1/n}=\lim_n(cq^n)^{-1/n}=q^{-1}$, which is larger than $1$.   Thus $F(1)<1$ and we can find $\delta\in(1,q^{-1})$ such that $1<F(\delta)<+\infty$. By continuity of $F$, there exists $\alpha$ such that $F(\alpha)=1$. 

So \eqref{eq:condition_defective} is true for some $1<\alpha<q^{-1}$, meaning moreover that  $(\sum_nn\alpha^nf_n)^{-1}\in(0,\infty)$. Using Theorem 3.5 of \cite{bremaud/1999}, we have
\[
0=-\lim_n \frac{\log \alpha^n\mathbb{P}(Y_n=1)}{n}=-\lim_n \frac{\log \mathbb{P}(Y_n=1)}{n}-\lim_n \frac{\log \alpha^n}{n}=-\lim_n \frac{\log \mathbb{P}(Y_n=1)}{n}-\log \alpha,
\]
and therefore $-\lim_n \frac{\log \mathbb{P}(Y_n=1)}{n}=\log\alpha>0$, and $\alpha$ is indeed the renewal convergence rate $\gamma$ of the process. This concludes the proof of the proposition.
\end{proof}

For the next lemma, consider $c$ is fixed, and thus $\gamma$ can be seen as a function of $q$.

\begin{lem}\label{lem1}
$\gamma$ is a {continuous function of $q$ on $q\in(0,1/(cd))$.}
\end{lem}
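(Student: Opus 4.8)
The plan is to characterize $\gamma(q)$ as the unique root of an implicit equation and then transfer continuity from the family of functions $\alpha\mapsto F_q(\alpha)$ to that root. First I would recall, from the proof of Lemma \ref{prop:renewal}, that writing $f_k(q):=cq^k\prod_{i=1}^{k-1}(1-cq^i)$ and $F_q(\alpha):=\sum_{k\ge1}\alpha^k f_k(q)$, the rate $\gamma(q)$ is precisely the root $\alpha=\gamma$ of $F_q(\alpha)=1$ lying in $(1,1/q)$. This root is unique: all coefficients $f_k(q)$ are strictly positive, so $F_q$ is strictly increasing on $(0,1/q)$; moreover $F_q(1)=\mathbb{P}(T<\infty)<1$, while $F_q(\alpha)\to\infty$ as $\alpha\uparrow 1/q$, since the radius of convergence of the power series is $q^{-1}$ and its general term $c(\alpha q)^k\prod_{i=1}^{k-1}(1-cq^i)$ tends to $c\prod_{i\ge1}(1-cq^i)>0$ at $\alpha=1/q$.

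Then, fixing $q_0\in(0,1/(cd))$ and setting $\gamma_0:=\gamma(q_0)\in(1,1/q_0)$, I would establish continuity at $q_0$ by a monotonicity/straddling argument. Given $\veps>0$ small enough that $\gamma_0+\veps<1/q_0$, strict monotonicity of $F_{q_0}$ yields
\[
F_{q_0}(\gamma_0-\veps)<1<F_{q_0}(\gamma_0+\veps).
\]
The key step is to show that, for each fixed $\alpha$, the map $q\mapsto F_q(\alpha)$ is continuous. Since $0\le f_k(q)\le cq^k$, we have $\alpha^kf_k(q)\le c(\alpha q)^k$; for $q$ in a neighborhood of $q_0$ small enough that $\alpha q\le r<1$ (possible because $\alpha q_0<1$), the series is dominated uniformly by the convergent geometric series $\sum_k cr^k$. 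By the Weierstrass $M$-test the convergence is uniform and, each $f_k(q)$ being a polynomial in $q$, the sum $F_q(\alpha)$ is continuous in $q$.

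Applying this with $\alpha=\gamma_0\pm\veps$, and noting that $\gamma_0+\veps<1/q_0$ guarantees $\gamma_0+\veps<1/q$ for all $q$ close enough to $q_0$ (so that $F_q(\gamma_0+\veps)$ remains finite), I obtain, for $q$ in a sufficiently small neighborhood of $q_0$,
\[
F_q(\gamma_0-\veps)<1<F_q(\gamma_0+\veps).
\]
Since $F_q$ is strictly increasing with unique unit-level crossing at $\gamma(q)$, this forces $\gamma_0-\veps<\gamma(q)<\gamma_0+\veps$, which is exactly continuity of $\gamma$ at $q_0$. As $q_0$ is arbitrary, $\gamma$ is continuous on $(0,1/(cd))$.

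The main obstacle I anticipate is the bookkeeping around the moving radius of convergence $1/q$: one must choose $\veps$ and the neighborhood of $q_0$ jointly so that the evaluation points $\gamma_0\pm\veps$ stay below $1/q$ uniformly, keeping every series in sight finite. Because $\gamma_0<1/q_0$ strictly, there is room to do this, so the difficulty is technical rather than substantive; the remainder is a routine Weierstrass $M$-test together with the intermediate-value argument afforded by the strict monotonicity of $F_q$.
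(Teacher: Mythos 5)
Your proof is correct, but it takes a genuinely different route at the key step. Like you, the paper characterizes $\gamma(q)$ as the unique root $\alpha\in(1,q^{-1})$ of $F_q(\alpha)=1$, via the argument in the proof of Lemma \ref{prop:renewal}. But where you prove continuity of the root by hand, the paper takes a sequence $q_\epsilon\to q$, verifies that one can fix $\delta\in(1,q^{-1})$ with $F(\delta)>1$ and $F_\epsilon(\delta)<\infty$ for all small $\epsilon$, and then invokes an external result of \cite{petersson/silvestrov/2013} on perturbed renewal equations to conclude $\gamma(q_\epsilon)\to\gamma(q)$. Your replacement for that citation is sound: the domination $\alpha^k f_k(q)\le c(\alpha q)^k$, valid uniformly on a neighborhood of $q_0$ where $\alpha q\le r<1$, gives continuity of $q\mapsto F_q(\alpha)$ by the Weierstrass $M$-test (each $f_k$ being a polynomial in $q$); strict monotonicity of $F_q$ in $\alpha$, which holds since all $f_k(q)>0$, then converts the straddling inequalities $F_q(\gamma_0-\veps)<1<F_q(\gamma_0+\veps)$ into $|\gamma(q)-\gamma_0|<\veps$. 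Your observation that $F_q(\alpha)\to\infty$ as $\alpha\uparrow 1/q$ (because the general term tends to $c\prod_{i\ge1}(1-cq^i)>0$ at the radius) is also correct and in fact slightly sharpens the paper's existence argument, which only locates some $\delta$ with $1<F(\delta)<\infty$. The trade-off: your argument is elementary and self-contained, at the cost of the radius-of-convergence bookkeeping you correctly flag (and correctly resolve, since $\gamma_0<1/q_0$ strictly leaves room to choose $\veps$ and the neighborhood jointly); the paper's argument is shorter and outsources the analysis to a general perturbation theorem that would also cover less explicit perturbations of the inter-arrival distribution.
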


\begin{proof} {Fix $q\in(0,1/(cd))$} and consider any sequence $q_\epsilon\rightarrow q$ as $\epsilon\rightarrow0$, where  {$q_\epsilon\in(0,1/(cd))$}.  Define naturally $f_{k,\epsilon}:=c q_\epsilon^k\prod_{i=1}^{k-1}(1-c q_\epsilon^i), k\ge1$ and observe that $f_{k,\epsilon}\rightarrow f_k$ for any $k\ge1$. For any $\epsilon$, we can use the proof of Lemma \ref{prop:renewal} to show that there exists a unique solution  $\alpha>1$ for the equation
\[
F_{\epsilon}(\alpha):=\sum_{n\ge1}\alpha^nf_{n,\epsilon}=1. 
\]
We naturally denote this solution by $\gamma(q_\epsilon)$. Observe moreover that we can find $\delta\in(1,q^{-1})$ such that $F(\delta)>1$ and $F_\epsilon(\delta)<\infty$ for any sufficiently small $\epsilon$. In these conditions, it is proved in \cite{petersson/silvestrov/2013} that  $\gamma(q_\epsilon)\rightarrow\gamma(q)$, showing that $\gamma$ is a continuous function of $q$ on {$(0,1/(cd))$.}

\end{proof}

\section{Proofs of the main results}\label{sec:proofs}

\begin{proof}[{\bf Proof of Theorem \ref{theo}}]

Let us draw the main steps of this proof. (1) We transform the problem of  {survival of the frog model} on the tree into that of controlling the propagation along one single branch (see displays \eqref{eq:suf1} and \eqref{eq:suf2} below). (2) We use a result of \cite{gallo/garcia/junior/rodriguez/2014} which implies that the probability that the dynamic along one single branch reaches distance $n$ is equal to the probability that a specific renewal process renews at time $n$. This allows us to relate to the preceding section, and specifically to prove that $\gamma_c:=\gamma(q_c)=d$. 
(3) We conclude the proof using Lemma \ref{prop:renewal} of the preceding section. \\
%
%

For the first step of the proof, we shall use a simple union bound for the lower bound on the critical parameter, and a classical coupling with branching processes for the upper bound. 
Let us introduce $A_n$ and $A_\infty$ denoting respectively $\{\textrm{a frog at distance $n$ of the root is activated}\}$ and $\{\textrm{infinitely many frogs are activated}\}$. Fix $d\ge2$ and $c>0$ and consider our frog model parametrized by $q$.  Naturally, we have $\theta(q)=\mathbb{P}_q(A_{\infty})=\lim_n\mathbb{P}_q(A_n)$. For any $v\in\mathcal{V}$, let $A^v:=\{\textrm{the frog of vertex $v$ is activated}\}$.

To find a lower bound for $q_c$, just observe that
$
\mathbb{P}_q(A_n)=\mathbb{P}_q(\cup_{v:d(0,v)=n}A^v)\le d^np_{q,n}
$
where $p_{q,n}$ denotes the common value (by symmetry) of  the $\mathbb{P}_q(A^v)$'s for any $v$ at distance $n$ of the root.  Thus, 
\begin{equation}\label{eq:suf1}
d^np_{q,n}\rightarrow0\Rightarrow q<q_c.
\end{equation}
 {In other words,  to find a non-trivial lower bound for $q_c$, it is sufficient to find a value $q>0$ such that the left side of \eqref{eq:suf1} holds.} 

In order to find an upper bound for $q_c$, we will couple our frog model, rescaled by some length $n\ge1$, with a branching process. {The coupling is the same as the one used in \cite[Section 3]{lebensztayn/machado/popov/2005} and \cite[Section 3.1]{lebensztayn/machado/martinez/2006}, so we only describe it informally here}. The individuals of the branching process are identified with vertices of the tree recursively as follows. Originally, there is one individual, which is the root. Its offspring is identified with the set of vertices  {at distance $n$ from the root which are visited by active frogs at some time of the process. That is, a vertex $v$ is identified with an individual of the offspring of the root provided $d(0,v)=n$ and the event $A^v$ occurs.} 
Recursively, we start one active frog from each vertex $v$ of the  offspring of the $(k-1)^{\text{th}}$ generation, and identify its offspring (of the $k^{\text{th}}$ generation) with the set of vertices {visited} by active frogs  and which are at distance $n$ from  $v$.

%
%
The resulting branching process is below our frog model in the sense that  if it survives, our model survives also. Notice that the offspring distribution of the first generation (children of the root) is different from the others, since on $\overrightarrow{\mathbb{T}}_d$, there is $(d+1)d^{n-1}$ vertices at distance $n$ from the root (whereas there are $d^n$ at distance $n$ from any other vertex).  This has no effect on the fact that the branching process survives if there exists $N\ge1$ such that $\mathbb{E}_p (\sum_{v:d(0,v)=N}{\bf1}\{A^v\})=d^Np_{q,N}>1$. This is also a sufficient condition for our frog model, so
\begin{equation}\label{eq:suf2}
\exists N:d^Np_{q,N}>1\Rightarrow q{\ge}q_c.
\end{equation}

With \eqref{eq:suf1} and $\eqref{eq:suf2}$ in hand, we come to the second step of the proof, which is to get informations about $p_{q,n}$, $n\ge1$, the probability that a given vertex at distance $n$ from the root is activated. In other words, we want to investigate the way the process propagates along one single branch of the tree, since this is the only way to reach this vertex. At this stage we make a comparison with another process of the literature, originally introduced by \cite{junior/machado/zuluaga/2011} under the name of ``firework process'' to model information spreading on $\bN$. {So let us briefly describe this model. We  start with one spreader  at site $0$ and ignorants at all the other sites of $\bN$. The spreaders transmit the information within a random distance, which are independent copies of an $\bN$-valued random variable $D$, on their right, as illustrated in Figure \ref{FIG:FP}.}

{Lemma 1 in \cite{gallo/garcia/junior/rodriguez/2014} states that the probability that the firework process on $\bN$ reaches site $n$ is equal to the probability that an undelayed renewal sequence (see Section \ref{sec:renewal} above) ${\bf Y}$ with hazard rate 
\[
h_k:=\frac{f_k}{\sum_{i\ge k}f_i}=\mathbb{P}(D\ge k)
\]
renews at time $n$. }

\begin{figure}[h]
\label{FIG:FP}
\begin{center}
\begin{tikzpicture}[scale=0.8]

\draw (-3,-2) -- (8.5,-2);
\draw (-3,-2.3) ;
\draw (8.8,-2) node {...};

\draw (-3,-2.3) node[below,font=\footnotesize] {$0$};

\filldraw [black] (-3,-2) circle (3pt) ;
\draw [very thick] (-2,-2) circle (3pt);
\filldraw [white] (-2,-2) circle (3pt);
\draw [very thick] (-1,-2) circle (3pt);
\filldraw [white] (-1,-2) circle (3pt);
\draw [very thick] (0,-2) circle (3pt);
\filldraw [white] (0,-2) circle (3pt);
\draw [very thick] (1,-2) circle (3pt);
\filldraw [white] (1,-2) circle (3pt);
\draw [very thick] (2,-2) circle (3pt);
\filldraw [white] (2,-2) circle (3pt);
\draw [very thick] (3,-2) circle (3pt);
\filldraw [white] (3,-2) circle (3pt);
\draw [very thick] (4,-2) circle (3pt);
\filldraw [white] (4,-2) circle (3pt);
\draw [very thick] (5,-2) circle (3pt);
\filldraw [white] (5,-2) circle (3pt);
\draw [very thick] (6,-2) circle (3pt);
\filldraw [white] (6,-2) circle (3pt);
\draw [very thick] (7,-2) circle (3pt);
\filldraw [white] (7,-2) circle (3pt);
\draw [very thick] (8,-2) circle (3pt);
\filldraw [white] (8,-2) circle (3pt);


\draw [->] (-3,-1.8) to [bend left=30] (0,-1.8);


\draw (-3,-4) -- (8.5,-4);


\draw (-3,-4.3) node[below,font=\footnotesize] {$0$};

\draw (0,-4.6) ;
\draw (8.8,-4) node {...};

\filldraw [gray] (-3,-4) circle (3pt);
\filldraw [black] (-2,-4) circle (3pt);
\filldraw [black] (-1,-4) circle (3pt);
\filldraw [black] (0,-4) circle (3pt);
\draw [very thick] (1,-4) circle (3pt);
\filldraw [white] (1,-4) circle (3pt);
\draw [very thick] (2,-4) circle (3pt);
\filldraw [white] (2,-4) circle (3pt);
\draw [very thick] (3,-4) circle (3pt);
\filldraw [white] (3,-4) circle (3pt);
\draw [very thick] (4,-4) circle (3pt);
\filldraw [white] (4,-4) circle (3pt);
\draw [very thick] (5,-4) circle (3pt);
\filldraw [white] (5,-4) circle (3pt);
\draw [very thick] (6,-4) circle (3pt);
\filldraw [white] (6,-4) circle (3pt);
\draw [very thick] (7,-4) circle (3pt);
\filldraw [white] (7,-4) circle (3pt);
\draw [very thick] (8,-4) circle (3pt);
\filldraw [white] (8,-4) circle (3pt);

\draw [->] (-2,-3.8) to [bend left=30] (1,-3.8);
\draw [->] (0,-3.8) to [bend left=30] (2,-3.8);


\draw (-3,-6) -- (8.5,-6);


\draw (-3,-6.3) node[below,font=\footnotesize] {$0$};

\draw (4,-6.6) ;
\draw (8.8,-6) node {...};

\filldraw [gray] (-3,-6) circle (3pt);
\filldraw [gray] (-2,-6) circle (3pt);
\filldraw [gray] (-1,-6) circle (3pt);
\filldraw [gray] (0,-6) circle (3pt);
\filldraw [black] (1,-6) circle (3pt);
\filldraw [black] (2,-6) circle (3pt);
\draw [very thick] (3,-6) circle (3pt);
\filldraw [white] (3,-6) circle (3pt);
\draw [very thick] (4,-6) circle (3pt);
\filldraw [white] (4,-6) circle (3pt);
\draw [very thick] (5,-6) circle (3pt);
\filldraw [white] (5,-6) circle (3pt);
\draw [very thick] (6,-6) circle (3pt);
\filldraw [white] (6,-6) circle (3pt);
\draw [very thick] (7,-6) circle (3pt);
\filldraw [white] (7,-6) circle (3pt);
\draw [very thick] (8,-6) circle (3pt);
\filldraw [white] (8,-6) circle (3pt);
\draw [->] (1,-5.8) to [bend left=30] (5,-5.8);
\draw [->] (2,-5.8) to [bend left=30] (4,-5.8);

\end{tikzpicture}
\end{center}
\caption{First three steps of the firework process. The white, gray and black sites represent ignorants, spreaders from earlier stages and current spreaders, respectively.}
\end{figure}
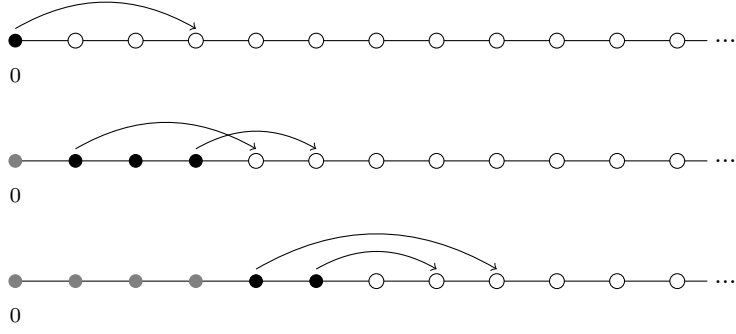

{Coming back to the dynamics of the frogs along one single branch, we identify the branch with $\bN$ and our active/inactive particles with spreaders/ignorants. The i.i.d. radius of transmission corresponds to the reach of the random walks of the activated frogs along the corresponding branch  in $\overrightarrow{\mathbb{T}}_d$. From this parallel,   we have $p_{q,n}=\mathbb{P}(Y_n=1)$ if we take  $\mathbb{P}(D\ge k)=cq^k$. }

As explained in Section \ref{fig:original_frog}, there exists $\gamma>0$ such that
$\log\gamma=-
\lim_n\frac{1}{n}\log \mathbb{P}(Y_n=1).
$
Suppose $\frac{d}{\gamma}>1$ (\emph{resp.} $\frac{d}{\gamma}<1$). There exists $\epsilon=\epsilon(d,\gamma) >0$ such that 
$\frac{d}{\gamma} e^{-\epsilon}>1$ (\emph{resp.} $\frac{d}{\gamma}e^{\epsilon}<1$). On the other hand, by the definition of limit, we know that for any $\epsilon>0$, there exist $N$ such that for any $n\ge N$ we have
$
e^{-n(\log\gamma+\epsilon)}\le \mathbb{P}(Y_n=1)\le e^{-n(\log\gamma-\epsilon)}
$
and thus, in particular
\[
\left(\frac{d}{\gamma}e^{-\epsilon}\right)^n\le d^n\mathbb{P}(Y_n=1)\le \left(\frac{d}{\gamma}e^{\epsilon}\right)^n.
\]
We therefore have the following sequence of implications:
\[
\frac{d}{\gamma}>1\Rightarrow\exists\epsilon:\frac{d}{\gamma} e^{-\epsilon}>1\Rightarrow\exists N:d^N \mathbb{P}(Y_N=1)>1.
\]
The other way around, 
\[
\frac{d}{\gamma}<1\Rightarrow\exists\epsilon:\frac{d}{\gamma} e^{ \epsilon}<1\Rightarrow \forall n\ge N,\,  d^n\mathbb{P}(Y_n=1)\le \left(\frac{d}{\gamma}e^{\epsilon}\right)^n{\rightarrow0}.
\]
{Thus, using \eqref{eq:suf2} and  \eqref{eq:suf1}, and recalling that $\gamma=\gamma(q)$ is a function of $q$, we proved that
\begin{align*}
\gamma(q)>d&\Rightarrow q<q_c\\
\gamma(q)<d&\Rightarrow q\ge q_c.
\end{align*}
Thanks to Lemma \ref{lem1}, we necessarily have $\gamma(q_c)=d$. Indeed, observe from the previous relations that $d\leq \lim_{q \nearrow q_c} \gamma(q) = \gamma(q_c)= \lim_{q \searrow q_c} \gamma(q) \leq d$.} To conclude the proof with the third and last step, using Lemma \ref{prop:renewal} which states that 
\[
 \sum_{k\ge1}\gamma^{k}(q)cq^k\prod_{i=1}^{k-1}(1-c q^i)=1.
 \]
 Thus we have
 \[
  \sum_{k\ge1}d^kcq_c^k\prod_{i=1}^{k-1}(1-c q_c^i)=1.
 \]

\end{proof}


\begin{proof}[{\bf Proof of Corollary \ref{cor:precise}}]

We have to find bounds for $  \sum_{k\ge1}d^kcq_c^k\prod_{i=1}^{k-1}(1-c q_c^i)=1$. 
For $n\ge2$ we can show that
\begin{equation}\label{eq:ref}
1-c q-c q^2\le\prod_{i=1}^{n-1}(1-c q^i)\le (1-c q),
\end{equation}
where the righthand side is trivial, the lefthand side can be shown easily by recursion. 
Now we have that
\begin{equation}\label{eq:1}
cdq_c+(1-c q_c-c q_c^2)c\sum_{n\ge2}(dq_c)^n\le 1\le cdq_c+(1-c q_c)c\sum_{n\ge2}(dq_c)^n.
\end{equation}
The first inequality of \eqref{eq:1} yields 
\begin{align*}
d
\le\frac{(c+1)(1 - \sqrt{1-4\frac{c^2}{(c+1)^2}q(q+1)}}{2c^2q^2(q + 1)}
\end{align*}
while the second inequality of \eqref{eq:1} yields 
\[
d\ge\frac{(c +1)\left(1- \sqrt{1-4q\frac{c^2}{(c+1)^2} }\right)}{2q^2c^2}.
\]


\end{proof}

\begin{proof}[{\bf Proof of Corollary \ref{theo2}}]
Using that $\sqrt{1-x}\le 1-\frac{x}{2}-\frac{x^2}{8}$ for $x\in[0,1]$, we get
\[
d\ge \frac{1+q\frac{c^2}{(c+1)^2} }{(c+1)q}.
\]
Reverting this inequality gives the lower bound of the corollary. 
On the other hand, using  that $\sqrt{1-x}\ge 1-\frac{x}{2}-\frac{x^2}{7}$ for $x\in[0,0.24]$, we obtain
\begin{equation}\label{eq:bound_better}
d\le\frac{1+\frac{8 c^2}{7(c+1)^2}q(q+1)}{(c+1)q}
\end{equation}
when $4c^2(c+1)^{-2}q (q +1)\le 0.24$. Recalling that $c\le1$, it is enough to prove that this inequality holds with $c=1$. From table \ref{fig:cone_percolation}, we see that the inequality $q_c (q_c +1)\le0.06$ holds for $d\ge3$. Reverting \eqref{eq:bound_better} gives the upper bound of the corollary. \\
 \end{proof}

\noindent{\bf Acknowledgments}
 We are particularly grateful to H. Lacoin for fruitful discussions concerning renewal theory which  lead to a substantial enhancement of the results.
 
 We thank two anonymous reviewers for the careful reading of the manuscript {(pointing a gap in an previous version of the paper) and for the comments and criticism concerning presentation. }
 
SG was supported by FAPESP (2015/09094-3), research fellowships CNPq (312315/2015-5) FAPESP (2017/07084-6) and Edital Universal CNPq (462064/2014-0). PMR was supported by FAPESP (2016/11648-0) and CNPq (304676/2016-0). 

\bibliographystyle{apt}
\bibliography{sandrobibli}

 \end{document}